\documentclass[11pt,reqno]{amsart}
\usepackage{graphicx} 
\usepackage{amsthm}
\usepackage{amsmath}
\usepackage{amssymb}
\usepackage{amsfonts}
\usepackage{mathtools}
\usepackage{fullpage}
\usepackage{python}
\usepackage{enumitem}
\usepackage{bm}
\usepackage{tikz}
\usepackage{algpseudocode}
\usepackage[linesnumbered,ruled]{algorithm2e}
\usepackage{appendix}
\usepackage{tabularx}
\usepackage{multirow}
\usepackage{comment}
\usepackage{bm}
\usepackage{subcaption}
\usepackage{longtable}
\usepackage{booktabs}
\usepackage{xcolor}
\usepackage{float}

\theoremstyle{plain}\newtheorem{theorem}{Theorem}[section]
\newtheorem{remark}{Remark}[section]
\newtheorem{definition}{Definition}[section]
\newtheorem{conjecture}{Conjecture}[section]
\newtheorem{corollary}{Corollary}[section]
\newtheorem{problem}{Problem}[section]
\numberwithin{equation}{section}
\makeatletter\let\th@plain\makeatother

\usepackage[backend=biber]{biblatex}
\title{\textnormal{Mapping Mathematical Hardness: Machine-Assisted Conjecture Discovery and the Quantification of Non-Triviality}}
\author{Madhuparna Das}
\thanks{Email: md2147@cam.ac.uk\\Department of Computer Science and Technology, University of Cambridge, Cambridge CB3 0FD, UK.}
\keywords{automated conjecture discovery, distribution prime $k$-tuples, hybrid sieve, Birch test, Mahalanobis distance}
\subjclass{Primary: 11N05, 11N13, 11N35.  Secondary: 68R12, 68T10.}
\date{}

\begin{document}

\begin{abstract}
Machine-assisted mathematical discovery has been a long-standing challenge in machine learning and artificial intelligence. In recent years, we have seen tremendous progress with generative AI, yet its contribution to automated discovery in advanced mathematical research has been limited. One of the most difficult benchmarks in this context is the Birch test, which asks whether a machine can discover truly novel and non-trivial mathematical structures without human intervention. In this work, we particularly focus on the branch of automated conjecture discovery. We use HypothesiX, an automated conjecture mining agent and analyse its generated conjectures related to the distribution of twin primes to verify the conditions of the Birch test. Furthermore, note that automated discovery is now operating at scale, but verifying its non-triviality still depends on human evaluation. We propose a benchmark to quantify the non-triviality of machine-generated conjectures using the Mahalanobis distance within an embedding cluster of selected known mathematical conjectures. We also note that this quantified benchmark can be used as an error indication signal to localise the incorrectness of a new mathematical statement, which autoformalisers fail to verify due to their limitations in proof discovery capability.
\end{abstract}

\maketitle
\tableofcontents
\section{Introduction}
\noindent
In recent years, machine learning and automated reasoning have begun to play an important role in mathematical research. Automated systems have achieved remarkable milestones, including solving IMO problems and, more recently, establishing proofs of conjectures such as Fel's conjecture~\cite{axiommath,alphaproof,aristotle}. However, our primary motivation in this work lies not in the automation of proofs but in the process of mathematical discovery itself. For mathematicians, intuition primarily guides the formulation of the right questions; it is through those questions that genuinely novel conjectures and proofs, and ultimately breakthroughs, are born. In this work, we use {\it HypothesiX}, a conjecture-mining agent designed to emulate mathematical intuition in order to uncover novel conjectures and facilitate their further exploration. 

\medskip
Recently, Bryan Birch proposed a mathematical analogue of the Turing test, intended to provide a benchmark for assessing whether a machine assisted mathematical discovery can be regarded as genuinely significant. This was later formalised by Yang Hui He and Mikhail Burtsev~\cite{yangmika}. This benchmark consists of three key conditions:
\begin{enumerate}
\item ``The discovery should be made automatically by the AI, without human intervention."
\item ``It should uncover a concrete mathematical structure."
\item ``It should be of sufficient importance to spark new research."
\end{enumerate}

\medskip
\noindent
We note that this benchmark can be interpreted in more than one way. Mathematical discovery broadly includes conjecture generation and discovery of proof techniques, and more recently their interplay. In this paper, we solely focus on conjecture generation, as HypothesiX is built specifically for this purpose. However, in order to generate new conjectures, a conjecture mining agent may also discover new definitions, which are often subjects of conjectures, or uncover new properties of known mathematical objects along the way.

Even though ChatGPT has passed the Turing test several times, it is well known that passing the Birch test is substantially more challenging, and the current LLMs or reasoning models remain limited in this capacity. In this work, we analyse one of the conjectures generated by HypothesiX and examine how it satisfies the three conditions of the Birch test. In HypothesiX architecture, we employ GPT-5 model from OpenAI in combination with our reasoning layer to generate 78 conjectures in the domain of analytic number theory, specifically on the distribution of primes. We provide numerical evidence in favour of some generated conjectures \footnote{The codes are available on GitHub. Link: \url{https://github.com/MadhuparnaDas96/HypothesiX-Benchmark}}. Owing to the limited formalisation currently available in Lean and the complexity and the advanced nature of the generated statements, formal verification within the Lean environment is not straightforward.

It is worth noting that automated theorem provers such as Lean and Isabelle remain limited in their ability to verify truly novel conjectures, as they rely heavily on formalised mathematical libraries that are largely constructed and maintained through manual effort. Consequently, assessing the novelty and significance of newly generated conjectures often still requires human evaluation. In this work, we also propose a quantitative approach to assessing the non-triviality condition of the Birch test benchmark. Using the Mahalanobis distance, we construct an embedding model of some known conjectures in the literature and compute a score for machine-generated conjectures relative to this distribution. The methodology is described in more detail in Section~\ref{benchmark}.

\subsection{Related results}

In this section, we discuss existing architectures and methodologies for automated conjecture discovery.

Chuharski et al.~\cite{chuharski2024mining} present a ``guess-and-check" pipeline in which large language models, specifically GPT-4, Claude Sonnet, and Gemini are prompted to generate conjectures about soluble groups along with corresponding GAP code. Each conjecture is then tested in GAP, and any counterexamples identified are fed back into the prompt to refine subsequent generations. From 792 generated conjectures, approximately 420 were unique (55.48\%), indicating a substantial degree of redundancy in the outputs. Of these unique conjectures, only 40 (9.52\%) produced no counterexamples under the performed tests. The authors also note that a small number of the generated conjectures corresponded to results already present in the literature and were therefore classified as redundant.

\medskip
\noindent
On the other hand, Alhessi et al~\cite{alhessi2025_lemmanaid} embed GPT-4 within the Isabelle/HOL proof assistant~\cite{nipkow2002isabelle} to suggest lemmas for user supplied functional programs. GPT-4 proposes candidate equations, which Isabelle then tries to prove or refute. While the model occasionally hallucinates familiar patterns (e.g. trivial associativity), it also uncovers non obvious distributive laws and analogies that classic tools like QuickSpec may miss. This work underscores how LLM driven generation, coupled with formal checking, can enrich automated theory exploration.

\medskip
\noindent
Onda et al~\cite{onda2025_leanconjecturer} introduce LeanConjecturer, a pipeline that automatically generates formal conjectures in the Lean theorem prover using large language models and automated filtering. The system extracts mathematical metadata from the mathlib, prompts a language model to propose new theorem statements, and then filters them by checking syntactic correctness and removing trivial results that can be automatically proved by tactics such as \texttt{aesop}. Using 40 mathlib files as seeds, the authors generated over 12,000 candidate conjectures in the area of topology and set theory, of which about 3,700 were valid and non-trivial, averaging roughly 103 conjectures per file.

\medskip
\noindent
Although these works are an important milestone for AI in mathematics, their system does not produce results that are genuinely novel or have the potential to drive significant further research. Often, the generated conjectures can be immediately reduced to known problems, meaning the last two criteria of the Birch test are not met. The key limitations of LLM-based architectures arise from their tendency to hallucinate and to generate known structures or refinement of notion, rather than discovering truly new mathematical problems. Our reasoning layer addresses this problem and helps make progress towards passing the Birch test.

\medskip
\noindent
Note that in~\cite{chuharski2024mining}, the authors compare generated conjectures with existing results using cosine similarity, focusing primarily on known mathematical and linguistic structures. Davies et al.~\cite{alex} employed machine learning to discover new conjectures in knot theory and representation theory by training models to identify structural relationships between mathematical invariants, with significance evaluated through expert mathematician review. 

\medskip
While subsequent systems such as AlphaProof and Aletheia have surpassed this work in terms of proof capability,~\cite{alex} remains methodologically significant in the context of conjecture mining. In particular, it is one of the few approaches where the quality of generated conjectures was assessed based on their mathematical significance rather than through a purely computational filter. However, this evaluation was entirely qualitative and reliant on human judgment, lacking a replicable scoring framework that could be systematically applied across large volumes of generated conjectures.

Given the rapidly expanding landscape of AI-generated mathematical discoveries and the absence of any replicable, quantitative framework for assessing their depth, there is a clear need for an automated scoring system to evaluate the non-triviality of machine-generated conjectures. To address this, we propose a quantification of the non-triviality condition of the Birch test and apply it to score the conjectures generated by HypothesiX.


\section{Analysis of the Generated Conjectures}\label{section2}
\noindent
The agent can generate conjectures in any area of mathematics, but for clarity and consistency, we focus only on additive and analytic number theory. We have used 71 Lean files in the agent's architecture from the mathlib4 library~\cite{mathlib4} and the GPT-5 model from OpenAI. 

\subsection{Background}

The most famous unsolved problem in number theory, often referred to as the “holy grail of mathematics”, is the Riemann Hypothesis (RH), posed by B. Riemann~\cite{rh} in 1859. For $s\in\mathbb{C}$, the Riemann zeta function
\begin{align*}
\zeta(s)
=\sum_{n=1}^{\infty}\frac{1}{n^s}
\end{align*}
converges for $\Re(s)>1$. 

\begin{conjecture}[Riemann Hypothesis]
All non-trivial zeros of $\zeta(s)$ lie on the line $\Re(s)=\frac{1}{2}$.
\end{conjecture}
\noindent
Even though the problem remains unsolved, it has numerous applications, including in quantum physics and cryptography. In number theory, the significance of the Riemann Hypothesis lies in its deep connection to the distribution of prime numbers.
 
\begin{theorem}[Prime Number Theorem]
Let $\pi(x)$ denote the prime counting function
\begin{align*}
\pi(x)
=\#\{p\leq x: p\in\mathbb{P}\},
\end{align*}
where $\mathbb{P}$ denotes the set of prime numbers. Then, one has that
\begin{align*}
\lim_{x\to\infty}\frac{\pi(x)\log x}{x}=1.    
\end{align*}
\end{theorem}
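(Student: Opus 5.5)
The plan is the classical analytic route through the Riemann zeta function and a Tauberian theorem. First I replace $\pi(x)$ by the Chebyshev function
\begin{align*}
\psi(x)=\sum_{n\le x}\Lambda(n),
\end{align*}
where $\Lambda$ is the von Mangoldt function. By partial summation, and because prime powers $p^k$ with $k\ge 2$ contribute only $O(\sqrt{x}\log x)$, the statement $\pi(x)\log x/x\to 1$ is equivalent to $\psi(x)\sim x$. Chebyshev's elementary bound $\psi(x)=O(x)$ will also be needed; it follows from the divisibility properties of the central binomial coefficient $\binom{2n}{n}$.

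Second, I relate $\psi$ to $\zeta$. For $\Re(s)>1$, the Euler product gives
\begin{align*}
-\frac{\zeta'(s)}{\zeta(s)}=\sum_{n\ge1}\frac{\Lambda(n)}{n^s}=s\int_1^\infty \psi(x)x^{-s-1}\,dx .
\end{align*}
I then continue $\zeta$ meromorphically to $\Re(s)>0$ via $\zeta(s)-\frac{1}{s-1}=\sum_n\int_n^{n+1}(n^{-s}-x^{-s})\,dx$. This shows that $\zeta$ has only a simple pole at $s=1$. Consequently $-\zeta'/\zeta-\frac{1}{s-1}$ is holomorphic on a neighbourhood of $\Re(s)\ge1$, provided $\zeta$ has no zeros there.

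The main obstacle is proving exactly that: $\zeta(1+it)\neq0$ for all real $t\neq0$. I would use the Mertens inequality
\begin{align*}
|\zeta(\sigma)^3\zeta(\sigma+it)^4\zeta(\sigma+2it)|\ge1 \quad (\sigma>1).
\end{align*}
It follows from taking logarithms of the Euler product together with $3+4\cos\theta+\cos2\theta=2(1+\cos\theta)^2\ge0$. Suppose $\zeta$ had a zero at $1+it$. Then, as $\sigma\to1^+$, the triple pole of $\zeta(\sigma)^3$ would be beaten by the quadruple zero of $\zeta(\sigma+it)^4$, and the product would tend to $0$, contradicting the inequality.

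Finally, I apply Newman's analytic Tauberian theorem. Take $f(t)=\psi(e^t)e^{-t}-1$, which is bounded by Chebyshev's bound. Its Laplace transform is $\frac{1}{s+1}\bigl(-\frac{\zeta'}{\zeta}(s+1)\bigr)-\frac1s$, which by the previous step extends holomorphically to $\Re(s)\ge0$. Newman's theorem then gives that $\int_1^\infty(\psi(x)-x)x^{-2}\,dx$ converges. A standard monotonicity argument converts this into $\psi(x)\sim x$. Suppose instead that $\psi(x)\ge\lambda x$ for some $\lambda>1$ and arbitrarily large $x$. Then, since $\psi$ is increasing, the integral over $[x,\lambda x]$ would be bounded below by a fixed positive constant, contradicting convergence; the case $\lambda<1$ is symmetric. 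Combined with the first reduction, this proves the theorem.
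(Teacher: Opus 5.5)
The paper gives no proof of this statement. It quotes the Prime Number Theorem as classical background, next to von Koch's RH-equivalent error term, and the only proof route it mentions is the Wiener--Ikehara Tauberian argument in item 4 of Section~\ref{comparsion}. Your proposal is a correct and essentially complete proof, resting only on Newman's Tauberian theorem, which you may legitimately cite. It is Newman's argument in Zagier's streamlined form, except that you prove $\zeta(1+it)\neq0$ with the Mertens $3$--$4$--$1$ inequality instead of Zagier's residue count.

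Compared with Wiener--Ikehara, your route has the same skeleton: non-vanishing on $\Re(s)=1$, then a Tauberian theorem applied to $\int_1^\infty\psi(x)x^{-s-1}\,dx$. Newman's theorem asks for more, namely holomorphic continuation of $g$ to a neighbourhood of $\Re(z)\ge0$ rather than just continuous boundary values after removing the pole. That costs nothing here because $\zeta$ is meromorphic. In exchange, the Tauberian step becomes a short contour integral instead of Fourier analysis, and monotonicity of $\psi$ is used only in your final $\lambda$-argument.

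Two details you leave implicit should be written down:
\begin{enumerate}
\item In the $3$--$4$--$1$ step you need $\zeta(\sigma+2it)$ to stay bounded as $\sigma\to1^+$. This holds because $1+2it\neq1$ and $s=1$ is the only pole.
\item You need $\zeta(s)\neq0$ for $\Re(s)>1$, which follows from the absolutely convergent Euler product.
\end{enumerate}
Neither is a gap.
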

\noindent
For the logarithmic integral $\mathrm{Li}(x)=\int_{2}^{x}\frac{dt}{\log t}$, the following statement is known to be equivalent to the Riemann Hypothesis~\cite{vankoch}.
\begin{align*}
\pi(x)=\mathrm{Li}(x)+O(\sqrt{x}\log x).    
\end{align*}
The above relation indicates that, under RH the deviations of $\pi(x)$ from its expected growth are tightly bounded, giving a precise measure of how irregularly primes are distributed. In 1923, Hardy and Littlewood posed a generalisation of the prime number theorem\footnote{For a detailed discussion on this topic, we refer the reader to~\cite[\S2.3]{iwanieckowlaski} and~\cite{sarnak2004}.
}.

\begin{conjecture}[First Hardy-Littlewood conjecture~\cite{hardylittlewood}]\label{hardylittewood}
Let $h_1,h_2,\ldots,h_k$ be distinct positive even integers such that the numbers of the sequence $\mathbb{P}_k=(p,p+h_1,\ldots,p+h_k)$ do not form a complete residue class with respect to any prime and let
\begin{align}\label{generalisedprimecountingfunction}
\pi_k(x)=\#\{p\leq x: p,p+h_1,\ldots,p+h_k\in\mathbb{P}\}.   
\end{align}
Then
\begin{align*}
\pi_k(x)
\sim C_k\int_{2}^{x}\frac{dt}{\log^{k+1}t},
\end{align*}
where
\begin{align}\label{ktupleconstant}
C_k
=2^k\prod_{\substack{p\in\mathbb{P}\\p\;\mathrm{odd}}}\frac{1-\frac{w(p;h_1,\ldots,h_k)}{q}}{\left(1-\frac{1}{p}\right)^{k+1}},
\end{align}
and $w(p;h_1,\ldots,h_k)$ denotes the number of distinct residues of $0,h_1,\ldots,h_k$ mod $p$.
\end{conjecture}

\medskip
\noindent
If we choose $k=1$ and $h_1=2$, then the above conjecture reduces to the twin prime conjecture~\cite{James1}, where 
\begin{align}\label{twinprimeconstant}
C_2=\prod_{\substack{p\in\mathbb{P}\\p\;\mathrm{odd}}}\frac{p(p-2)}{(p-1)^2}\approx0.66,   
\end{align}
denotes the twin prime constant. 


In the following section, we analyse one particular response by HypothesiX and show how it satisfies the conditions of the Birch test benchmark. The following results propose new structures.

\subsection{The automaticity condition}\label{selectedinequalities}

To bypass the first condition, we have used a simple prompt ``{\it Can you generate an inequality between the prime counting function and the twin prime counting function. Something simple yet non-trivial.}" 

\medskip
Note that we do not ask for any specific mathematical formulation, nor do we ask the machine to generate a new function. The prompt merely specifies a general topic rather than a precise mathematical objective.

In response to this prompt, the agent defined a new function, which is unknown in the literature. Then the agent proposed basic and ``simple" properties of the defined function. In this context, it is worth mentioning that, to build a new problem or proof structure in mathematics, mathematicians often define new functions or assemble known phenomena in a specific way to solve a problem. They then study their basic properties and eventually use them to achieve more advanced results. Without any additional training data or precise instructions, HypothesiX behaves in such a manner automatically\footnotetext{Throughout this paper, we have identified the definitions and conjectures generated by HypothesiX. We have proven some of them.}.

\begin{definition}[HypothesiX]\label{bqdefinitionst}
For a squarefree $Q$ where $6 \mid Q$, let $U_Q$ be the set of residues $r\bmod Q$ such that both $r$ and $r+2$ are coprime to $Q$. The residue-pairing bound is
\begin{align}\label{bqdefinition}
B_Q(x)=\sum_{r \in U_Q} \min\left( \pi(x; Q, r), \pi(x; Q, r+2) \right)    
\end{align}
where 
\begin{align*}
\pi(x; Q, a)
=\#\{p\leq x:p\equiv a \bmod Q\}.
\end{align*}  
\end{definition}
\noindent
From the definition of $B_Q(x)$, we can say that it is well-defined. This addresses the first half of the second condition of the Birch test,in which one is asked to discover a concrete mathematical structure.

\subsection{The non-triviality condition}

In this section, we study whether the newly defined object constitutes a genuinely novel structure, with the aim of sparking further research, thereby bypassing the second half of the second condition and the third condition of the Birch test.

Note that the concept of definition of the set $U_Q$ is already known in the literature~\cite[\S9]{HalberstamRichert} and has been used several times in numerous publications. However, the definition $B_Q(x)$ is an unknown function, which has significance, as discussed in later sections.

\begin{figure}
\centering
\includegraphics[width=0.5\linewidth]{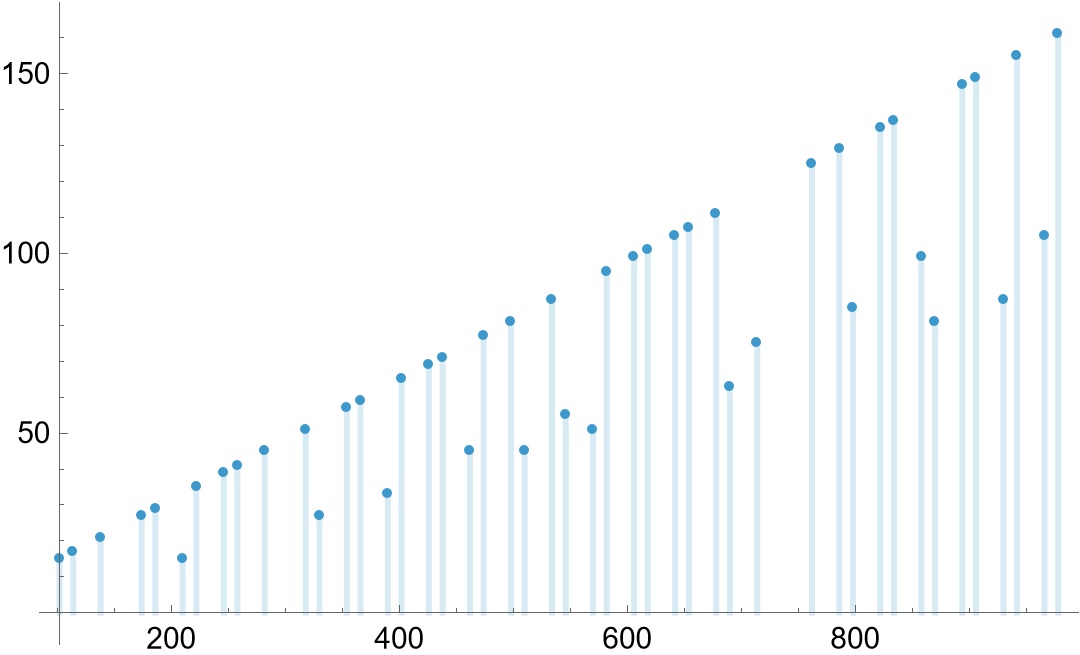}
\caption{The residue-pairing bound $B_Q(x)$ at $Q=6$}
\label{fig:placeholder}
\end{figure}

\medskip
\noindent
The following conjectures were generated by the agent, and they only study the basic properties of $B_Q(x)$ and some of them can be proved easily. However, the following results will help us show how we can define a new sieve method by studying the function $B_Q(x)$.

\begin{conjecture}[HypothesiX, Conjecture A.1]\label{a1}
For all $x \geq 7$ and any squarefree $Q$ with $6 \mid Q$:$$\pi_2(x) \leq B_Q(x)+2.$$    
\end{conjecture}
\noindent
We have numerically verified the above conjecture for $Q=30,210$ and $7\leq x\leq 10^6$, and found that it holds in all cases tested (see the GitHub repository for the code). Current methods fall short of proving the above inequality unconditionally. However, we prove a weaker version of the proposed conjecture below.

\begin{theorem}\label{pi2bq}
For all $x \geq 7$ and any square-free $Q$ with $6 \mid Q$
\begin{align*}
\pi_2(x) \ll_Q B_Q(x).    
\end{align*}
\end{theorem}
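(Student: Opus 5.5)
The plan is to compare both sides with $x/\log x$. No sieve input is needed, because the bound asked for is very weak. The trivial inequality $\pi_2(x)\le \pi(x)\ll x/\log x$ (Chebyshev, or the Prime Number Theorem) already controls the left-hand side. It therefore suffices to show that $B_Q(x)\gg_Q x/\log x$, which is a lower bound of the same order as $\pi(x)$ itself. An upper bound $\pi_2(x)\ll x/\log^2 x$ from Brun's or Selberg's sieve would give a stronger statement, but it is not required.

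\textbf{Step 1: $U_Q\neq\emptyset$.} By the Chinese remainder theorem, it is enough to choose, for each prime $p\mid Q$, a residue $r_p \bmod p$ with $r_p\not\equiv 0$ and $r_p\not\equiv -2 \pmod p$.
\begin{itemize}
\item For $p=2$, take $r_2\equiv 1$.
\item For $p=3$, take $r_3\equiv 2$, so that $r+2\equiv 1 \pmod 3$.
\item For $p\ge 5$, at least $p-2\ge 3$ admissible residues remain.
\end{itemize}
Since $Q$ is squarefree, these local choices combine, so $|U_Q|=\prod_{p\mid Q}(p-2)\ge 1$ (in fact $p=2$ contributes the factor $1$, not $0$, since $r$ and $r+2$ have the same parity).

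\textbf{Step 2: lower bound for each summand.} For every $r\in U_Q$, both $r$ and $r+2$ are reduced residues modulo $Q$. The Prime Number Theorem for arithmetic progressions (Dirichlet--de la Vall\'ee Poussin, with $Q$ fixed) gives
\begin{align*}
\pi(x;Q,r)\sim \pi(x;Q,r+2)\sim \frac{x}{\varphi(Q)\log x}.
\end{align*}
Hence there is an $x_0(Q)$ such that, for $x\ge x_0(Q)$, each minimum in \eqref{bqdefinition} is at least $\frac{x}{2\varphi(Q)\log x}$. Summing over $U_Q$ then yields
\begin{align*}
B_Q(x)\ge \frac{|U_Q|}{2\varphi(Q)}\cdot\frac{x}{\log x}\gg_Q \pi(x)\ge \pi_2(x),
\end{align*}
which proves the theorem for $x\ge x_0(Q)$.

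\textbf{Step 3: the bounded range $7\le x<x_0(Q)$ (main obstacle).} Here $\pi_2(x)\le \pi(x_0(Q))$ is a constant depending only on $Q$. So the inequality follows by enlarging the implied constant, provided $B_Q(x)\ge 1$ on this range. The difficulty is that this can fail for small $x$ when $Q$ is large. For example, with $Q=210$ and $x=7$, every prime $\le 7$ divides $Q$, so $B_Q(7)=0$, while $\pi_2(7)=2$. The honest statement is therefore that the inequality holds for all $x\ge x_1(Q)$. Here $x_1(Q)$ is the least $x$ such that some pair of classes $r,r+2$ with $r\in U_Q$ both contain a prime $\le x$; such an $x$ exists by Dirichlet's theorem. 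Equivalently, the implied constant can be taken uniform once $B_Q(x)>0$.

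In writing the proof I would state this caveat explicitly: the range $x\ge 7$ is valid as stated for $Q=6$ (where $B_6(7)\ge 1$, since $5\equiv 5$ and $7\equiv 1 \pmod 6$), but must be read as $x\ge x_1(Q)$ in general. Modulo this point, Steps 1--2 give the result with an effective constant of about $2\varphi(Q)/|U_Q|$.
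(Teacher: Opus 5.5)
Your argument is correct, and it proves the statement on the range where the statement is actually true. It takes a different route from the paper, though.

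\textbf{The paper's route.} The paper uses no information about primes in progressions. It splits off the twin primes $p\le Q$, a $Q$-dependent number. A twin prime $p>Q$ lies in a class $r\in U_Q$, and the paper bounds the twin primes in that class both by $\pi(x;Q,r)$ and, via the injection $p\mapsto p+2$, by $\pi(x;Q,r+2)+O(1)$. This gives the additive inequality $\pi_2(x)\le B_Q(x)+c_Q+2|U_Q|$. That inequality is elementary and more informative than the theorem: it is Conjecture A.1 with a $Q$-dependent constant in place of $2$.

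\textbf{Your route.} You use the prime number theorem for arithmetic progressions to get $B_Q(x)\gg_Q x/\log x$, so in fact $\pi(x)\ll_Q B_Q(x)$. This needs heavier analytic input and discards the additive structure. In exchange, it makes plain that the theorem as stated carries essentially no twin-prime information.

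\textbf{The small-$x$ caveat.} Your Step~3 caveat is right, and it applies equally to the paper's proof. The paper's last step, $B_Q(x)+c_Q+2|U_Q|\ll_Q B_Q(x)$, silently assumes $B_Q(x)\ge 1$. Your example $Q=210$, $x=7$ refutes the statement as written, since $B_{210}(7)=0$ while $\pi_2(7)=2$. The same failure already occurs for $Q=30$ at $x=7$.

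\textbf{A cleaner combined proof.} Take the paper's additive bound together with $B_Q(x)\ge 1$ for $x\ge x_1(Q)$. The latter needs only Dirichlet's theorem plus the fact that $B_Q$ is non-decreasing in $x$. This gives $\pi_2(x)\le (1+c_Q+2|U_Q|)\,B_Q(x)$ on that range, with no appeal to the prime number theorem in progressions.

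\textbf{A small slip.} The correct count is $|U_Q|=\prod_{p\mid Q,\,p>2}(p-2)$. As written, your product includes the factor $2-2=0$, which your parenthetical remark then corrects.
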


\begin{proof}
Consider the sets
\begin{align*}
S_1
=\{p\in\pi_2(x): p\leq Q\} \;\text{and}\; 
S_2
=\{p\in\pi_2(x): p> Q\}.
\end{align*}
\noindent
Since $Q$ is fixed and sqaurefree, the set $S_1$ becomes a constant, say $c_Q$. Additionally, for $p>Q$ and $p,p+2\in\mathbb{P}$, then $(p,Q)=(p+2,Q)=1$. Define
\begin{align*}
t_r(x)
\coloneqq\{p\leq x: p\in\mathbb{P}_2,\;p>Q\;\text{and}\;p\equiv r(\bmod{Q})\}.
\end{align*}
\noindent
Note that by definition $|t_r(x)|\leq\pi(x;Q,r)$. Let $\rho:p\mapsto p+2$. Then $\rho$ is injective on $t_r(x)$, and each image $p+2$ is a prime with $p+2\equiv r+2(\bmod Q)$. Thus, $|t_r(x)|\leq\pi(x;Q,r+2)+2$. Combining these, we may write
\begin{align*}
|t_r(x)|
\leq\min(\pi(x;Q,r),\pi(x;Q,r+2))+2.
\end{align*}
\noindent
Summing over the residues $r\in U_Q$, we arrive at
\begin{align*}
|S_2|\leq \sum_{r\in U_Q}|t_r(x)|
\leq B_Q(x)+2|U_Q|
\end{align*}
Therefore,
\begin{align*}
\pi_2(x)
=|S_1|+|S_2|
\leq B_Q(x)+c_Q+2|U_Q|
\ll_Q B_Q(x),
\end{align*}
completing the proof.
\end{proof}
\noindent
The above theorem states that if the twin prime conjecture is true, then $B_Q(x)$ is unbounded as $x\to\infty$ for any squarefree $Q$ with $6|Q$\footnotetext{Theorem~\ref{pi2bq} is a weaker version of the original conjecture, as the original statement is impossible to prove within current knowledge on the distribution of twin primes.}.

\begin{theorem}[HypothesiX, Conjecture A.3]\label{monotonerefinement}
If $Q_1 \mid Q_2$ are both square-free and divisible by 6, then for every $x \geq 7$
\begin{align*}
B_{Q_2}(x) \leq B_{Q_1}(x).    
\end{align*}
\end{theorem}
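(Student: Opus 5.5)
The plan is to compare the two sums in \eqref{bqdefinition} by grouping the residues modulo $Q_2$ according to their reduction modulo $Q_1$. The whole argument rests on the elementary inequality
\begin{align*}
\sum_{i}\min(a_i,b_i)\leq\min\Big(\sum_i a_i,\sum_i b_i\Big),
\end{align*}
valid for nonnegative reals $a_i,b_i$, together with the fact that the counting functions $\pi(x;Q,a)$ are additive over the fibres of the reduction map $\mathbb{Z}/Q_2\mathbb{Z}\to\mathbb{Z}/Q_1\mathbb{Z}$.

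\textbf{Step 1: reduction sends $U_{Q_2}$ into $U_{Q_1}$.} Let $\phi$ denote reduction modulo $Q_1$. If $r\in U_{Q_2}$, then $r$ and $r+2$ are coprime to $Q_2$, hence coprime to its divisor $Q_1$. So $s=\phi(r)$ lies in $U_{Q_1}$, and $\phi(r+2)=s+2$. Consequently $U_{Q_2}$ is the disjoint union of the fibres $F_s=\phi^{-1}(s)\cap U_{Q_2}$ over $s\in U_{Q_1}$, where some fibres may be empty. This gives
\begin{align*}
B_{Q_2}(x)=\sum_{s\in U_{Q_1}}\sum_{r\in F_s}\min\big(\pi(x;Q_2,r),\pi(x;Q_2,r+2)\big).
\end{align*}

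\textbf{Step 2: bound each fibre sum.} Fix $s\in U_{Q_1}$. A prime counted by $\pi(x;Q_2,r)$ with $r\in F_s$ satisfies $p\equiv s \pmod{Q_1}$. Distinct $r$ give disjoint sets of primes, so
\begin{align*}
\sum_{r\in F_s}\pi(x;Q_2,r)\leq \pi(x;Q_1,s).
\end{align*}
Similarly, the map $r\mapsto r+2$ is injective on $F_s$ and lands in residues $\equiv s+2 \pmod{Q_1}$, which gives
\begin{align*}
\sum_{r\in F_s}\pi(x;Q_2,r+2)\leq \pi(x;Q_1,s+2).
\end{align*}
Applying the min-of-sums inequality to the fibre and then these two bounds yields
\begin{align*}
\sum_{r\in F_s}\min\big(\pi(x;Q_2,r),\pi(x;Q_2,r+2)\big)\leq \min\big(\pi(x;Q_1,s),\pi(x;Q_1,s+2)\big).
\end{align*}

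\textbf{Step 3: conclude.} Summing the last inequality over $s\in U_{Q_1}$ gives $B_{Q_2}(x)\leq B_{Q_1}(x)$. This holds for every $x$, so the hypothesis $x\geq7$ is not really needed. I do not expect a genuine obstacle. The only point needing care is the bookkeeping in Step 2: the shifted residues $r+2$ for $r\in F_s$ must be distinct and must all reduce to $s+2$. Both hold because translation by $2$ is a bijection of $\mathbb{Z}/Q_2\mathbb{Z}$ commuting with $\phi$.
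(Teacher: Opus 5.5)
Your proof is correct and follows essentially the same route as the paper: reduce $U_{Q_2}$ modulo $Q_1$ into $U_{Q_1}$, split it into fibres over $s\in U_{Q_1}$, and bound each fibre's two sums by $\pi(x;Q_1,s)$ and $\pi(x;Q_1,s+2)$ using disjointness of subprogressions. You then take the minimum before summing over $s$. You are also right that empty fibres are harmless, so the surjectivity the paper asserts is not needed, and that the hypothesis $x\geq 7$ plays no role.
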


\begin{proof}
Let $r \in U_{Q_2}$. Thus, by Definition~\ref{bqdefinition}, $(r,Q_2) = (r+2,Q_2) = 1$. Since $Q_1$ and $Q_2$ are square-free, every prime divisor $\ell\mid Q_1$ also divides $Q_2$, i.e., $\ell \mid Q_2$. Hence $(r,Q_1) = (r,Q_2) = 1$. Therefore, the residue $\bar{r}=r\bmod{Q_1}$ lies in $U_{Q_1}$. In particular, the reduction map $\rho: r\mapsto r\bmod{Q_1}$ for $U_{Q_2}\to U_{Q_1}$ is well defined.

\medskip
Now, for $s \in U_{Q_1}$, define
\begin{align*}
v_s
\coloneqq \{r \in U_{Q_2} : r \equiv s \bmod{Q_1}\}.
\end{align*}
It is evident that $\rho$ is surjective and that the fibres $v_s$ are pairwise disjoint and cover $U_{Q_2}$. Thus,
\begin{align*}
U_{Q_2}
= \bigcup_{s \in U_{Q_1}} v_s .
\end{align*}

\noindent
Now, for $r \in v_s$, every prime $p \equiv r(\bmod{Q_2})$ satisfies $p \equiv s(\bmod{Q_1})$ because $Q_1 \mid Q_2$. Thus, the residue classes $(r \bmod{Q_2})_{r \in v_s}$ are pairwise disjoint subprogressions, all contained in the single progression $s(\bmod{Q_1})$, giving
\begin{align*}
\sum_{r \in v_s} \pi(x;Q_2,r)
\leq \pi(x;Q_1,s).
\end{align*}

\noindent
By a similar argument, we have $r+2 \equiv s+2(\bmod{Q_1})$, and hence
\begin{align*}
\sum_{r \in v_s} \pi(x;Q_2,r+2)
\leq \pi(x;Q_1,s+2).
\end{align*}

\noindent
Since $\min(a,b) \leq a$ and $\min(a,b) \leq b$, taking the minimum of these upper bounds gives
\begin{align*}
\sum_{r \in v_s} \min \bigl(\pi(x;Q_2,r),\pi(x;Q_2,r+2)\bigr)
\leq \min\bigl(\pi(x;Q_1,s),\pi(x;Q_1,s+2)\bigr).
\end{align*}

\noindent
Using the above inequality yields
\begin{align*}
B_{Q_2}(x)
&= \sum_{r \in U_{Q_2}} \min\bigl(\pi(x;Q_2,r),\pi(x;Q_2,r+2)\bigr) \\
&= \sum_{s \in U_{Q_1}} \sum_{r \in v_s} \min\bigl(\pi(x;Q_2,r),\pi(x;Q_2,r+2)\bigr) \\
&\leq \sum_{s \in U_{Q_1}} \min\bigl(\pi(x;Q_1,s),\pi(x;Q_1,s+2)\bigr)
=B_{Q_1}(x),
\end{align*}
concluding the proof.
\end{proof}

\noindent
An immediate consequence of the preceding theorem is the following.
\begin{corollary}
Let $Q$ be a fixed square-free integer divisible by $6$. Then, for every $x \geq 7$,
\begin{align*}
B_Q(x) \leq B_6(x).
\end{align*}
\end{corollary}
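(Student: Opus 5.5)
The corollary follows by applying Theorem~\ref{monotonerefinement} with $Q_1=6$ and $Q_2=Q$. The work is to check that this choice meets the theorem's hypotheses and then read off the conclusion.

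First, $Q_1=6$ is squarefree and divisible by $6$. Second, $Q_2=Q$ is squarefree and divisible by $6$ by assumption. Third, the theorem requires $Q_1\mid Q_2$, and $6\mid Q$ is exactly the hypothesis of the corollary. Applying the theorem then gives $B_Q(x)\le B_6(x)$ for every $x\ge 7$.

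The only point needing a remark is the degenerate case $Q=6$, where the statement reads $B_6(x)\le B_6(x)$ and holds trivially. The theorem also covers this case directly, since its proof goes through with $Q_1=Q_2$: every fibre $v_s$ is then a singleton and the inequality is an equality.

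If one wants to guard against any gap in Theorem~\ref{monotonerefinement}, the same argument can be run directly for $Q_1=6$. Here $U_6=\{5\}$, since $r\bmod 6$ must be coprime to $6$ and so must $r+2$, which rules out $r\equiv1$ because $3\mid 3$. Hence
\begin{align*}
B_6(x)=\min\bigl(\pi(x;6,5),\pi(x;6,1)\bigr).
\end{align*}
Every $r\in U_Q$ reduces to $5 \bmod 6$. The classes $r\bmod Q$ are disjoint subclasses of $5\bmod 6$, and likewise the classes $r+2\bmod Q$ are disjoint subclasses of $1\bmod 6$. Summing the minima over $r\in U_Q$ therefore gives a total at most $\min\bigl(\pi(x;6,5),\pi(x;6,1)\bigr)=B_6(x)$.

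This direct check is the step needing the most care: one must confirm that $r\mapsto r+2$ is injective on residues mod $Q$. That holds trivially, and it is what makes the shifted subclasses disjoint.
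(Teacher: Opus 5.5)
Your proposal is correct and follows the paper, which states the corollary as an immediate consequence of Theorem~\ref{monotonerefinement} with $Q_1=6$ and $Q_2=Q$. Your supplementary direct check via $U_6=\{5\}$ is sound, and it is simply the theorem's fibre argument specialised to the single fibre over $5 \bmod 6$.
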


\begin{theorem}[HypothesiX, Conjecture A.8]\label{primedrop}
Fix squarefree $Q$ and prime $p \geq 5$ coprime to $Q$. There exists $X_0$ such that for all $x \geq X_0$
\begin{align*}
B_{pQ}(x)\leq B_Q(x)-1    
\end{align*}          
\end{theorem}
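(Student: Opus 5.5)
We compare $B_{pQ}$ with $B_Q$ fibre by fibre, as in the proof of Theorem~\ref{monotonerefinement}. Since $6\mid Q$ and $p\ge 5$ is coprime to $Q$, $pQ$ is squarefree and divisible by $6$. By the Chinese remainder theorem, residues mod $pQ$ correspond to pairs $(s \bmod Q,\; t \bmod p)$. Hence $U_{pQ}$ is the set of pairs with $s\in U_Q$ and $t\not\equiv 0,-2 \pmod p$, so each fibre $v_s$ has exactly $p-2$ elements.

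For fixed $s\in U_Q$, write $a_t=\pi(x;pQ,(s,t))$ and $b_t=\pi(x;pQ,(s+2,t+2))$. Then
\begin{align*}
\sum_{t\not\equiv 0,-2} a_t \le \pi(x;Q,s)
\quad\text{and}\quad
\sum_{t\not\equiv 0,-2} b_t \le \pi(x;Q,s+2).
\end{align*}
The first sum omits the classes $t\equiv 0$ and $t\equiv -2$. The class $t\equiv 0$ contains at most the prime $p$ itself. The class $t\equiv -2$ corresponds to primes $q\equiv s\pmod Q$ with $q\equiv -2\pmod p$, and there are $\sim \pi(x)/(\varphi(Q)(p-1))$ of these by the prime number theorem for arithmetic progressions (Dirichlet/Siegel–Walfisz, with $Q$ and $p$ fixed). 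In the same way, the $b$-sum omits the class $t+2\equiv 2$ (that is, $q\equiv 2 \pmod p$) and the class $t+2\equiv 0$, which is at most the single prime $p$.

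The inequality $\sum_t \min(a_t,b_t)\le \min(\sum a_t,\sum b_t)$ then gives
\begin{align*}
\sum_{r\in v_s}\min(\cdots)\le \min\bigl(\pi(x;Q,s)-D_s(x),\;\pi(x;Q,s+2)-D'_s(x)\bigr),
\end{align*}
where $D_s(x)$ and $D'_s(x)$ count the omitted primes. Each of them is $\ge \pi(x)/(2\varphi(Q)(p-1))-1$ for large $x$. Summing over $s\in U_Q$, which is nonempty because $6\mid Q$ (take $r\equiv -1 \pmod 6$ and lift), gives
\begin{align*}
B_{pQ}(x)\le B_Q(x)-\min_s\min(D_s,D'_s)\le B_Q(x)-1
\end{align*}
once $x\ge X_0$, where $X_0$ is chosen so that $\pi(x)/(2\varphi(Q)(p-1))\ge 2$.

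The main obstacle is ineffectivity: the argument needs at least two primes in each progression $s\pmod Q$, $-2\pmod p$ (and its analogues). Dirichlet's theorem supplies these, and only existence of $X_0$ is claimed. A subtlety is to keep the $\min$ bound honest. The deficiency must be subtracted from both arguments of the $\min$, which is why the analogous counts $D'_s$ are needed on the $b$ side. The relevant progressions are admissible because $\gcd$ conditions hold: $s$ and $s+2$ are units mod $Q$, and $\pm2$ are units mod $p$.
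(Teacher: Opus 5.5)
Your proof is correct and follows essentially the paper's route. You compare $B_{pQ}$ and $B_Q$ fibre by fibre over the CRT lifts, note that the excluded progressions ($\equiv s \pmod Q$, $\equiv -2 \pmod p$) and ($\equiv s+2 \pmod Q$, $\equiv 2 \pmod p$) contain primes for large $x$, and subtract that deficit from both arguments of the $\min$. The paper instead uses bare Dirichlet on a single fibre $s_0$, with the other fibres nonnegative by Theorem~\ref{monotonerefinement}, whereas your prime-number-theorem count on every fibre gives the stronger gap $B_Q(x)-B_{pQ}(x)\gg_{Q,p} x/\log x$.
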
 

\begin{proof}
Choose any $s_o\in U_Q$. Then by the Chinese remainder theorem each $s\in U_Q$, lifts exactly $p$ distinct classes modulo $pQ$. The fiber over $s_0$ is
\begin{align*}
v_{s_0}
=\{r\in U_{pQ}:r\equiv s_0(\bmod{Q})\},
\end{align*}
consisting of the $p-2$ lifts satisfying $r\not\equiv0,-2(\bmod{p})$. Now, we define
\begin{align*}
r_0
\coloneqq F(s_0,Q,p-2,p),
\end{align*}
where $F(s_0,Q,p-2,p)$ means the unique residue class $r_0(\bmod{pQ})$ such that
\begin{align*}
r_0\equiv s_0(\bmod{Q})\quad\text{and}\quad r_0\equiv p-2(\bmod{p}).    
\end{align*}
Similarly, 
\begin{align*}
r_3
\coloneqq F(s_0+2,Q;2,p).
\end{align*}

\noindent
Note that $p\geq5$ and $(p,Q)=1$. Then, for $r_0\equiv p-2(\bmod{p})$, $(r_0,p)=1$ and $(r_0,Q)=(s_0,Q)=1$. Analogously, $(r_3,p)=(r_3,Q)=(s_0+2,Q)=1$. By Dirichlet's theorem, we can write
\begin{align*}
\pi(p,Q;r_0)\geq 1\quad\text{and}\quad
\pi(p,Q;r_3)\geq 1.
\end{align*}

\noindent
Since $r_0\equiv p-2\equiv -2(\bmod{p})$, we have $p|(r_0+2)$, so $r_0\notin U_{pQ}$ and thus $r_0\notin v_{s_0}$. Therefore, $r_0$ contributes to $\pi(x;Q,s_0)$ but not to $\sum_{r\in v_{s_0}}\pi(x;pQ,r)$. For any $r\in v_{s_0}$, we have $r \not\equiv -2(\bmod{p})$ and $r\not\equiv 0(\bmod{p})$, 
so $r+2\not\equiv 2(\bmod{p})$. But $r_3\equiv 2(\bmod{p})$, so $r_3$ is not of the form $r+2$ for any $r\in v_{s_0}$. Since $r_3 \equiv s_0+2(\bmod{Q})$, it does contribute to $\pi(x,;Q,s_0+2)$ but not to $\sum_{r\in v_{s_0}}\pi(x;pQ,r+2)$. 

\medskip
Set
\begin{align*}
A
\coloneqq\sum_{r\in v_{s_0}}\pi(x;pQ,r)\quad\text{and}\quad
B
\coloneqq\sum_{r\in v_{s_0}}\pi(x;pQ,r+2).
\end{align*}
Then for $x\geq X_0$, we have
\begin{align*}
\pi(x;Q,s_0)
\geq A +\pi(x;pQ,r_0)\geq A+1.
\end{align*}
Similarly, $\pi(x;Q,s_0+2)\geq B+1$. Since the $\min$ function is monotone increasing
\begin{align*}
\min\bigl(\pi(x; Q, s_0),\pi(x; Q, s_0 + 2)\bigr)
\geq\min(A+1, B+1) 
=\min(A,B)+1.    
\end{align*}
By the inequality $\sum_{r}\min(a_r,b_r)\leq\min(\sum_{r}a_r,\sum_{r}b_r)$ for $a_r,b_r\geq 1$, yields
\begin{align*}
\min\bigl(\pi(x;Q,s_0),\pi(x; Q, s_0 + 2)\bigr)
\geq\sum_{r\in v_{s_0}}\min\bigl(\pi(x;pQ,r),\pi(x;pQ,r+2)\bigr)+1.    
\end{align*}
Summing over all $s\in U_Q$ and using non-negativity of each term from Theorem~\ref{monotonerefinement}, 
\begin{align*}
B_Q(x)-B_{pQ}(x)
=&\sum_{s\in U_Q}\min\bigl(\pi(x;Q,s),\pi(x;Q,s+2)\bigr)\\
&-\sum_{s\in U_Q}\sum_{r\in v_s} \min\bigl(\pi(x;pQ,r),\pi(x;pQ,r+2)\bigr)
\ge 1,    
\end{align*}
thereby concluding the proof.
\end{proof}

\noindent
Before exploring any further generated conjectures, we explain how $B_Q(x)$ is connected to known problems in the literature and how its properties can be used to develop further methodologies in sieve theory. It is worth noting that using the preceding discussion, our aim is to show that the generated output by HypothesiX is truly novel and sufficient importance to spark new research.   

\medskip
\noindent
Sieve theory begins with Eratosthenes' ancient procedure of systematically eliminating multiples of small primes to isolate primes up to a bound. The first modern breakthrough came from Brun in 1920, who truncated the inclusion-exclusion process to obtain finite, usable bounds proving, among other things, that the sum of reciprocals of twin primes i.e., $\sum_{p\in\mathbb{P}_2}\left(\frac{1}{p}+\frac{1}{p+2}\right)$ converges. In 1947, Selberg then replaced Brun's combinatorial truncation with an optimised quadratic weighting scheme, yielding sharper upper bounds and underpinning results like Chen's theorem~\cite{chen} that every large even number equals $p+P_2$, where $P_2$ is a semiprime\footnote{A semiprime is the product of two prime numbers. For a comprehensive reading on sieve methods, we refer the reader to~\cite{harman}.}. Running parallel to these was the large sieve of Linnik and Bombieri-Vinogradov, which gave powerful average distribution results for primes in arithmetic progressions. The field was then transformed by Goldston-Pintz-Y{\i}ld{\i}r{\i}m~\cite{GPY1,GPY2,GPY3,GPY4}, and culminating later in a breakthrough result, Maynard~\cite{James2} and Tao introduced a multidimensional sieve, optimising weights over $k$-tuples of shifts simultaneously to prove that prime gaps are bounded infinitely often, yielding $\liminf_n(p_{n+1}-p_n) \leq 12$ under the Elliott-Halberstam conjecture. Threading through all of these methods, the unresolved parity problem is still the fundamental barrier preventing a proof of the twin prime conjecture or Goldbach's conjecture.

\begin{conjecture}[Parity problem~\cite{tao}]
If $A$ is a set whose elements are all products of an odd number of primes (or are all products of an even number of primes), then (without injecting additional ingredients), sieve theory is unable to provide non-trivial lower bounds on the size of $A$. Also, any upper bounds must be off from the truth by a factor of 2 or more.  
\end{conjecture}

\noindent
Leveraging the nonlinear and envelope structure of $B_Q(x)$, we propose a hybrid sieve approach that integrates the Maynard–Tao sieve, and outline how it may improve our understanding of the distribution of twin primes in arithmetic progressions beyond the existing literature. Furthermore, we present four open problems in this direction that are not machine-generated, but instead arise from our exploration of this new hybrid sieve. Finally, we formulate the parity problem in a quantitative manner for the first time within the framework of $B_Q(x)$ and its properties.

\medskip

The purpose of the following argument is to show that the machine-generated definition of $B_Q(x)$, along with related conjectures, can spark new research and is of sufficient importance to make further progress in the known literature on unsolved problems.

\subsubsection{The hybrid sieve setup}\label{hybridsieve} 

Although the Maynard-Tao sieve is multidimensional, it was primarily developed to study lower bound for gaps between primes, motivated by Selberg sieve. 

\medskip
\noindent
For fixed square-free $Q$ s.t. $6|Q$, we define the vector
\begin{align*}
V_{Q}(x):=(\pi(x;Q,r))\in (\mathbb{Z}/Q\mathbb{Z})^{\times} \in \mathbb{R}^{\phi(Q)}.
\end{align*}
\noindent
Then the pairing function is a map such that
\begin{align*}
B_Q(x)
=\Phi_Q(V_Q(x)), \;\text{where}\; \Phi_Q(v):= \sum_{r\in U_Q}\min(v_r,v_{r+2}).
\end{align*}
\noindent
The function $\Phi_Q: \mathbb{R}^{\phi(Q)} \to \mathbb{R}_{\geq 0}$ is ``concave" because each term $\min(v_r,v_{r+2})$ is a concave function of $(v_r,v_{r+2})$, and a non-negative sum of concave function is concave. 

\medskip
\noindent
For each $r\in U_Q$, we define the minimum adaptive weight
\begin{align*}
\sigma_{Q}(r)
:=
\frac{\min(v_r,v_{r+2})}{v_r}\in (0,1].
\end{align*}
\noindent
Note that $\sigma_Q(r)=1$ precisely when class $r$ is the binding constrain or twin pair formulation and $\sigma_Q(r)<1$, whenever $r$ has more primes than can be matched by the residue class $r+2$. Thus, the pairing bound now literally 
\begin{align*}
B_Q(x)
:=\sum_{r\in U_Q}v_r \sigma_Q(r),
\end{align*}
\noindent
a prime count in admissible class discounted class by class on how tightly the pairing constrain binds. Define the pairing defect 
\begin{align*}
\Delta_{Q}(x):= cB_Q(x)-\pi_2(x)\geq 0,    
\end{align*}
for some constant $c>0$. By the monotone refinement theorem $\Delta_Q(x)$, is non-increasing in $Q$ along the primorial ladder $6|30|210|\cdots$, thus 
\begin{align*}
\Delta_6(x)\geq \Delta_{30}(x)\geq \Delta_{210}(x)\geq \cdots\geq\Delta_{\infty}(x):= \lim_{Q}\Delta_Q(x)\geq 0.
\end{align*}
\noindent
This structure measures how much the local modular structure overestimate the true twin prime count.

\medskip
\noindent
We recall the Maynard sieve weight for a prime $k$-tuple $\mathcal{H}=\{h_1,\ldots, h_k\}$ produces weight
\begin{align*}
w_n:=\left(\sum_{\substack{d_i|n+h_i\\\forall i\\ d_i<R}}\lambda_{d_1,\ldots, d_k}\right)^2\; \text{where}\; \lambda_d=\mu(d)(\log R/d)^k,    
\end{align*}
optimised so that the ratio
\begin{align*}
\rho_{\mathrm{MT}}
=\frac{\sum_{n}w_n\sum_{i=1}^{k}{\bf{1}}_{(n+h_i)\in\mathbb{P}}}{\sum_{n}w_n}
\end{align*}
is maximised. For bounded gaps, Maynard showed $\rho_{\mathrm{MT}}>m$, implies at least $m+1$ primes in infinitely many touples of $\mathcal{H}$. Since we are in the set up of the twin primes, we choose the pair $\mathcal{H}=\{0,2\}$, decompose the Maynard sum by $r(\bmod{Q})$. Thus, for each $r\in U_Q$, let 
\begin{align*}
w_n^{(r)}:= w_n{\bf{1}}_{n\equiv r(\bmod{Q})}\;\text{and}\;
\Tilde{w}_n:=\sum_{r\in U_Q} w_n^{(r)}\sigma_Q(r).
\end{align*}
Now, the modified ratio
\begin{align*}
\Tilde{S}_1
=\sum_{n}\Tilde{w}_n,
\quad \Tilde{S}_2 
=\sum_{n}\Tilde{w}_n({\bf{1}}_{n\in\mathbb{P}}+{\bf{1}}_{n+2\in\mathbb{P}}),
\end{align*}
where
\begin{align*}
S_1=\sum_{n\leq x}w_n,\quad
S_2=\sum_{n\leq x} w_n({\bf{1}}_{n\in\mathbb{P}}+{\bf{1}}_{n+2\in\mathbb{P}}).
\end{align*}
Since, $\sigma_Q(r)\in(0,1]$, we have $\Tilde{w}_n\leq w_n$, whenever $v_r>v_{r+2}$. Then we claim that $\Tilde{\rho}_Q\geq \rho_{\mathrm{MT}}$. Decomposing by class
\begin{align*}
\Tilde{\rho}_Q-\rho_{\mathrm{MT}}
=\frac{\Tilde{S_2}S_1-S_2\Tilde{S_1}}{S_1\Tilde{S_1}}.
\end{align*}
\noindent
Since,
\begin{align*}
\Tilde{S_1}
=S_1-\sum_{r}(1-\sigma_Q(r))S_1(r),\quad \Tilde{S}_2=S_2-\sum_{r}(1-\sigma_Q(r))\cdot S_2(r).
\end{align*}
\noindent
Thus, the numerator reduces to
\begin{align*}
\sum_{r}(1-\sigma_Q(r))(S_2(r)S_1-S_1(r)S_2)\frac{1}{S_1}.
\end{align*}
The factor $S_2(r)/S_1(r)$ is the local Maynard ratio in class $r$, the density of twin primes among all sieve supported $n\equiv r$. Thus, the class $r$ for which $1-\sigma_Q(r)>0$ are precisely those $v_r>v_{r+2}$ i.e., class $r$ has an excess of primes that can not be matched into twin pairs. For those classes $S_2(r)/S_1(r)<S_2/S_1-\rho_{\mathrm{MT}}$ because not all primes in class $r$ have a prime in $r+2$. Therefore, each term in the sum is positive and $\Tilde{\rho}_Q>\rho_{\mathrm{MT}}$. Recall that $\Delta_Q(x)=cB_Q(x)-\pi_2(x)$. Then
\begin{align}\label{sieveweightconnection}
\Tilde{\rho}_Q 
=\rho_{\mathrm{MT}}\left(1+\frac{\Delta_Q(x)}{cB_Q(x)-\Delta_Q(x)}\right)
=\rho_{\mathrm{MT}}\frac{cB_Q(x)}{\pi_2(x)}.
\end{align}
\noindent
We define the primorial sequence 
\begin{align*}
Q_k
\coloneqq \prod_{p\leq p_{k+2}}p.
\end{align*}
Then by~\eqref{sieveweightconnection} and Theorem~\ref{monotonerefinement}, yields
\begin{align*}
\Tilde{\rho}_{Q_{k+1}}
=\rho_{\mathrm{MT}}\frac{cB_{Q_{k+1}}(x)}{\pi_2(x)}
\leq \rho_{\mathrm{MT}}\frac{cB_{Q_{k}}(x)}{\pi_2(x)}
=\Tilde{\rho}_{Q_{k}}.
\end{align*}
Hence, $\{\Tilde{\rho}_{Q_{k}}\}$ itself is monotone and non-increasing, and
\begin{align*}
\Tilde{\rho}_{Q_{k}}-\Tilde{\rho}_{Q_{k+1}}
\geq \rho_{\mathrm{MT}}\frac{c}{\pi_2(x)}(B_{Q_{k}}(x)-B_{Q_{k+1}}(x)).
\end{align*}
By Theorem~\ref{primedrop} for all $x\geq X_0$, we have $B_{Q_{k+1}}(x)\leq B_{Q_{k}}(x)-1$ and thus
\begin{align*}
\Tilde{\rho}_{Q_{k+1}}-\Tilde{\rho}_{Q_{k}}  
\geq c\frac{\rho_{\mathrm{MT}}}{\pi_2(x)}.
\end{align*}
Therefore,
\begin{align*}
\Tilde{\rho}_{\infty}
\coloneqq\lim_{k\to\infty} \Tilde{\rho}_{Q_{k}} 
=\rho_{\mathrm{MT}} \frac{cB_{\infty}(x)}{\pi_2(x)},
\end{align*}
where $B_{\infty}(x)=\lim_{k\to\infty}B_{Q_k}(x)$.

\subsubsection{Open directions and the parity problem}\label{openproblems}
Using the hybrid sieve framework developed in the previous section, we formulate several open problems. Note that the following problems are not machine generated. 

\medskip
The key distinction of the hybrid sieve framework, in comparison to the Maynard–Tao sieve, lies in the fact that the sieve weight ratio $\tilde{\rho}_Q$ is non-increasing, monotonic and have concavity. We exploit this property to formulate the following problems, which offer new directions for research at the intersection of sieve theory and harmonic analysis.

\medskip
\noindent
Define $\phi(s)=\Tilde{S}_1(e^s)$ by interpolating $Q \mapsto \Tilde{S}_1(Q)$ along the primorial ladder via $s = \log Q$. Since $\widetilde{S}_1(Q)$ is non-increasing in $Q$ by Theorem~\ref{monotonerefinement}, and $\Phi_Q$ is concave, the function $\phi$ is a natural candidate for complete monotonicity.

\begin{problem}[Complete Monotonicity for Primorial moduli]\label{1}
For fixed $x$ and the primorial ladder $Q_k$, define the sequence $s_k:=\Tilde{S}_1(Q_k)$. Prove that the finite differences $(-1)^n \delta^n s_k \geq 0$ for all $n \geq 0$, where $\delta s_k = s_{k+1}-s_k$, and conclude that $\Tilde{S}_1$ extends to a completely monotone function of $\log Q$, admitting the Bernstein representation
\begin{align}\label{s1q}
\Tilde{S}_1(Q) 
=\int_0^\infty Q^{-t}\, d\mu(t)    
\end{align}
for a positive measure $\mu$ depending on $x$.
\end{problem}

Given the expression~\eqref{s1q}, the measure $\mu$ encodes how $\Tilde{S}_1$ decays as the primorial moduli grows. Since $\Tilde{S}_1(Q)$ depends on the twin primes in arithmetic progressions mod $Q$, the measure $\mu$ implicitly carries information about distribution of twin primes.

\begin{problem}\label{2}
Let $\mu$ be the Bernstein measure of $\Tilde{S}_1(Q)$. Prove that the $k$-th moment of $\mu$ satisfies  
\begin{align*}
\int_{0}^{\infty} t^k \mu d(t)
=(-1)^k\frac{d^k}{d(\log Q)^k}\Tilde{S}_1(Q)\bigg|_{Q=Q_0}
\end{align*}
and express these moments explicitly in terms of $\sigma_Q$. 
\end{problem}
\noindent
In light of the preceding problem, one may further explore what the Bombieri-Vinogradov theorem and the Elliott-Halberstam conjecture respectively imply about the support and concentration of $\mu$. 

From the previous argument, we know that $\Tilde{\rho}_Q>\rho_{\mathrm{MT}}$. By expression~\eqref{s1q}, sufficiently large $\Tilde{S}_1(Q)$ for fixed $Q$ corresponds to $\mu$ being more concentrated near $t=0$, since $Q^{-t}\to 1$ as $t\to 0$. 

\begin{problem}\label{3}
Show that $\Tilde{\rho}_Q>\rho_{\mathrm{MT}}$ is equivalent to $\mu$ satisfying
\begin{align*}
\int_{0}^{\varepsilon} d\mu(t)
>\int_{0}^{\varepsilon} d\mu_{\mathrm{MT}}(t), \quad\text{for all}\; \varepsilon>0,
\end{align*}
where $\mu_{\mathrm{MT}}(t)$ is the Bernstein measure of the Maynard-Tao sieve weight. The $\sigma_Q$ discounting retains only well-paired classes where $v_r\approx v_{r+2}$, suppressing classes $v_r\gg v_{r+2}$. In terms of the Bernstein measure, this is exactly concentration of $\mu$ near $t=0$, since mass at large $t$ corresponds to classes that become increasingly imbalanced under primorial refinement.
\end{problem}

We note that if $\Delta_Q\not\to 0$, then $\mu$ necessarily retains residual mass at high frequencies mass that no choice of $\sigma_Q$ discounting can remove. This offers a natural spectral reading of the parity obstruction, one that is intrinsic to the structure of $\mu$ rather than an external limitation imposed on the sieve.

\begin{problem}[Tauberian theorem for $\Tilde{S}_1(Q)$]\label{4}
Suppose
\begin{align*}
\mu([0,\varepsilon]) \sim C \varepsilon^\alpha \quad \text{as}\quad \varepsilon \to 0^{+}.    
\end{align*}
Prove that
\begin{align*}
\widetilde{S}_1(Q) \sim \frac{C'}{(\log Q)^\alpha} \quad \text{as } Q \to \infty,    
\end{align*}
with $C'$ explicitly determined by $C$ and $\alpha$, and derive a full asymptotic expansion in descending powers of $\log Q$.    
\end{problem}

\noindent
This yields two consequences that are new to the literature and spark further research. First, it provides an asymptotics for $\Tilde{S}_1$ as a function of the primorial moduli $Q$. Second, and more importantly, it establishes a direct and quantitative link between the decay rate of $\Tilde{S}_1(Q)$ and the concentration of $\mu$ near $t = 0$, which by Problem~\ref{3} is governed by the pairing defect $\Delta_Q(x)$. 

In particular, if the Tauberian asymptotics force the concentration of $\mu$ near $t = 0$ to grow without bound as $Q \to \infty$, this would imply $\Delta_Q(x) \to 0$, which is precisely the parity obstruction. Problem~\ref{4} is therefore not merely an asymptotic refinement, but a potentially viable route to the parity problem itself, approached through the concentration behaviour of $\mu$ rather than through any combinatorial sieve theoretic argument.

\begin{remark}
We note that the original conjecture proposed by HypothesiX, on the inequality concerning $\pi_2(x)$ and $B_Q(x)$, is almost as difficult as the parity problem itself. We only prove a weaker form in Theorem~\ref{pi2bq}. We have further analysed this conjecture in Section~\ref{appendix}. The proposed four problems above can be considered as an intermediate step to prove Conjecture~\ref{a1}.
\end{remark}
\noindent
Finally, we conclude this section by expressing the parity problem in terms of the pairing defect function and explaining how this reformulation can be useful in comparison to the recent formulation by Murty and Vatwani~\cite{murty}.

\begin{conjecture}[Parity problem]
There is no known sieve method that can prove that
\begin{align*}
\lim_{Q\to\infty} \Delta_Q(x)=0.    
\end{align*}    
\end{conjecture}

\noindent
Murty and Vatwani address the parity problem through the correlation of the von Mangoldt function $\Lambda(n)$ with the shifted M\"{o}bius function $\mu(n+h)$, where the Elliott-Halberstam conjecture controls the distribution of $\Lambda(n)$ in arithmetic progressions and a shifted Möbius equidistribution conjecture controls $\mu(n+h)$ over shifted primes. Neither of these objects has any sensitivity to which specific residue classes modulo $Q$ the primes occupy. Our machine generated framework instead resolves the parity obstruction into a sequence of local, class by class pairing imbalances, each measured by $\sigma_Q(r)$ and accumulating monotonically along the primorial ladder. This local resolution of the parity barrier is combinatorially new, and offers the prospect of attacking the obstruction incrementally, one primorial level at a time, rather than requiring a global conjecture on the behaviour of $\mu(n+h)$ whose depth is comparable to Chowla's conjecture.

However, their formulation carries a decisive advantage that the present framework does not yet match: they identify a precisely statable conjecture whose truth directly implies the existence of infinitely many twin primes. The condition $\Delta\to 0$ remains the natural target of the present framework, but no tractable sufficient condition for it has yet been established.

\medskip
\noindent
We conclude this section by noting that the two frameworks attack the parity barrier from different directions, each with its own natural advantages, and progress in either is likely to illuminate the other.

\subsubsection{Comparison with existing results}\label{comparsion}

The notion introduced in Sections~\ref{hybridsieve} and~\ref{openproblems} sits at the intersection of sieve theory and harmonic analysis in a way that has no direct precedent in the literature, showing that the machine generated framework $B_Q(x)$, along with its properties, sparks further research. Below, we briefly summarise its importance relative to existing work.

\begin{enumerate}
\item The monotone refinement of $B_Q(x)$ established in Theorem~\ref{monotonerefinement} extends naturally to the hybrid sieve ratio $\Tilde{\rho}_Q$, but the stronger claim of Problem~\ref{2} that is completely monotone along the primorial ladder and admits a Bernstein representation, has no analogue in any existing sieve theoretic work. While the primorial ladder appears in Halberstam-Richert and the Selberg sieve as a natural sequence of moduli, no prior work asks whether sieve weight sums along this ladder are completely monotone in the Hausdorff sense or uses Bernstein–Widder theory as a structural tool.

\medskip
\item Problem~\ref{2}, connecting the moments of the Bernstein measure $\mu$ to the local pairing weights $\sigma_Q(r)$, has no close analogue in the literature. The Bombieri–Vinogradov theorem and Elliott–Halberstam conjecture control primes in arithmetic progressions on average over moduli but say nothing about moments of any measure associated to sieve weights. Study of the moment problems in analytic number theory, such as Sato–Tate moments or moments of $L$-functions, concern entirely different objects.

\medskip
\item Problem~\ref{3} gives a third independent formulation of the parity obstruction, distinct from Selberg's combinatorial statement, the bilinear Liouville correlation of Friedlander–Iwaniec, and the shifted M\"{o}bius equidistribution of Murty–Vatwani~\cite{murty}. The latter two approaches inject external analytic information about the M\"{o}bius or Liouville function from outside the sieve. The present reformulation that the parity obstruction corresponds to residual mass of $\mu$ at large $t$ is  intrinsic to the functional analytic structure of $\Tilde{S}_1(Q)$ and requires no further information.

\medskip
\item Problem~\ref{4} applies a Karamata-type Tauberian theorem to sieve weight sums along the primorial ladder, a context in which Tauberian methods have not previously been used. The closest existing applications are the Wiener–Ikehara proof of the prime number theorem and the Selberg–Delange method for Dirichlet series, but these operate on fundamentally different objects. The conclusion of Problem~\ref{4} has no parallel in the existing literature and would constitute a new route into the parity problem.
\end{enumerate}

\section{The non-triviality Benchmark}\label{benchmark}
\noindent
In this section, we benchmark the conjectures generated by HypothesiX. In the previous section, we explored a particular output and showed how it satisfies the conditions proposed in the Birch test. However, analysing each conjecture in depth to inspire further research would require substantial time and effort. Therefore, we verify the correctness of the generated conjectures using Mathematica or manual evaluation and propose a method to quantify the non-triviality condition for the Birch test benchmark.

\subsection{The Birch Test} 

We quantify the third condition of the Birch test, commonly referred to as the non triviality criterion, and demonstrate that the conjectures generated by HypothesiX make measurable progress towards satisfying this requirement. We emphasise that this constitutes one possible approach to quantification, and that alternative or more refined measures may exist.

\subsubsection{Quantification of non-triviality}\label{nontriviality}
Our objective is to formalise this intuition by defining a quantitative notion of non-triviality grounded in structural properties of mathematical statements. 

\medskip

First we fix a mathematical domain $\mathcal{D}$, for the purpose of this article we have fixed 18 well-known unsolved problems from analytic and additive number theory. Let $\mathcal{J}$ denote the set of all conjectures expressible in $\mathcal{D}$. We define a feature map ${\bm{\theta}}: \mathcal{J}\to\mathbb{R}^d$, where ${\bm{\theta}}(c)$ represents the fundamental structural properties of a conjecture $c$. We consider
\begin{align*}
{\bm{\theta}}(c)
=(\theta_1(c),\ldots,\theta_6(c)),
\end{align*}
where:
\begin{itemize}
\item $\theta_1$ is the number of known equivalent formulations or strong consequences.
\item $\theta_2$ measures the minimum axiomatic strength and proof-theoretic complexity required to prove $c$, with higher scores assigned to conjectures that exceed the capacity of all currently known axiom systems and proof methodologies.
\item $\theta_3$ counts non-trivial known results related to the conjecture $c$ in the literature.
\item $\theta_4$ measures computational verifiability on bounded instances.
\item $\theta_5$ counts the number of known results that assume $c$ as true and build upon it non-trivially across multiple mathematical domains, independent of whether $c$ has been proved.
\item $\theta_6$ counts independent reductions of $c$ to established hard problems.
\end{itemize}

\noindent
Consider the exemplar set of conjectures $R\subset\mathcal{D}$ such that $R=\{c_1,\ldots,c_n\}$. Let
\begin{align*}
X_R
=\{{\bm{\theta}}(c_i):c_i\in R\}.
\end{align*}

\noindent
We model $X_R$ as samples from an unknown distribution $\mathcal{H}$ representing structural hardness. More precisely, the empirical hardness distribution is defined as
\begin{align*}
\mathcal{H}
\coloneqq \frac{1}{n}\sum_{i=1}^{n}\delta_{c_i},
\end{align*}
where $\delta_c$ is Dirac delta function and $n=18$. Table~\ref{knownconjectures} presents the 18 conjectures in the reference set $R$ and their assigned score vectors $\bm{\theta}(c)$\footnote{Given the size of the reference set $R$, scores are assigned on the interval [1,10]. For a larger exemplar set, this range can be scaled accordingly.}.

\medskip
\begin{table}[ht]   
\begin{center}
\begin{tabular}{|c|c|}
\hline
Conjectures &  $\bm{\theta}=$ [$\theta_1,\theta_2,\theta_3,\theta_4,\theta_5,\theta_6$]\\
\hline
Riemann Hypothesis & [10,10,10,10,10,10]\\
\hline
Twin Prime Conjecture & [7,8,8,8,6,4]\\
\hline
Prime $k$-tuples Conjecture & [9,9,9,9,7,4]\\
\hline
Elliott–Halberstam Conjecture & [8,8,7,5,7,6]\\
\hline
Generalised Elliott–Halberstam Conjecture & [8,9,6, 4,6,6]\\
\hline
Bateman–Horn Conjecture & [10,9,7,9,7,5]\\
\hline
Dickson's Conjecture & [8,7,8,8,6,4]\\
\hline
Polignac's Conjecture & [6,7,8,9,6,4]\\
\hline
Firoozbakht's Conjecture & [5,7,4,10,3,2]\\
\hline
Granville's Refinement of Cram\'{e}r's Conjecture & [4,8,5,3,3,2]\\
\hline
Hardy–Littlewood Second Conjecture & [4,6,5,3,3,2]\\
\hline
Bunyakovsky Conjecture & [7,7,5,9,4,5]\\
\hline
Chowla Conjecture & [8,9,8,6,7,6]\\
\hline
Shanks Conjecture & [4,5,4,8,4,3]\\
\hline
Schinzel's Hypothesis & [9,9,6,7,7,6]\\
\hline
Legendre's Conjecture & [5,5,6,9,6,4]\\
\hline
Landau's Fourth Problem & [6,6,7,9,5,5]\\
\hline
Parity Problem & [7,10,8,5,7,7]\\
\hline
\end{tabular} 
\end{center}
\caption{Score assignments for the examplar set $R$}
\label{knownconjectures}
\end{table}

\begin{remark}
All conjectures in $R$ concern the distribution of primes and are closely related to the Riemann Hypothesis, which is the same mathematical territory as the conjectures we generate and evaluate in this work. This ensures that the hardness scores we assign are meaningful and comparable across the examplar set and the generated conjectures.    
\end{remark}

\subsubsection{Non-triviality measure by Mahalanobis distance}\label{mahalanobis}

Standard Euclidean distance assumes that the data, in this case the motif of a conjecture, are distributed spherically. However, the complexity of mathematical conjectures is often skewed. For example, a small increase in logical depth may be far more significant than a large increase in the number of equivalent formulations. To account for such differences in scale and correlation among structural components, we employ the Mahalanobis distance.

\medskip
\noindent
Let $\mu\in\mathbb{R}^d$ and $\Sigma\in\mathbb{R}^{d\times d}$ denote the empirical mean and covariance of $X_R$. To ensure invertibility, the covariance is regularised as $\Sigma \leftarrow \Sigma + \varepsilon I$ where $\varepsilon = 10^{-6}$ and $I$ is the $6\times 6$ identity matrix. For any conjecture $c\in\mathcal{J}$ with feature vector ${\bm{\theta}}={\bm{\theta}}(c)$, the squared Mahalanobis distance is defined as\footnote{Note that one may choose $\mathcal{J}$ to be a large set , here we have chosen $\mathcal{J}=R$.}
\begin{align}\label{d2formula}
d^2({\bm{\theta}}) 
=({\bm{\theta}}-\mu)^{\top}\Sigma^{-1}({\bm{\theta}}-\mu).
\end{align}
This quantity measures the deviation of $c$ from the exemplar set after accounting for feature correlations. For a reference conjecture ${\bm{\theta}}^{(i)}={\bm{\theta}}(c_i) \in R$, the non-triviality score is computed via a leave one out procedure. Let $R_{-i} = R \setminus \{c_i\}$ denote the reference set with $c_i$ excluded. Then we define
\begin{align}\label{upsilon}
\Upsilon({\bm{\theta}}^{(i)}) 
\coloneqq \frac{\#\{{\bm{\theta}}^{(i)} \in R_{-i} : d^2({\bm{\theta}}^{(j)}) \leq d^2({\bm{\theta}}^{(i)})\}}{n-1}.
\end{align}
The scale is calibrated so that $\Upsilon = 1$ for the ceiling set $\mathcal{C} = \{$Riemann Hypothesis$\}$, which defines the upper boundary of structural hardness in $R$. For the ceiling conjecture, the comparison pool is restricted to $R \setminus \mathcal{C}$, ensuring $\Upsilon = 1$ holds by construction.

For a new conjecture $c_{\mathrm{new}}\notin R$, the feature vector $\bm{\hat{\theta}}(c_{\mathrm{new}})=[\hat{\theta}_1(c_{\mathrm{new}}),\ldots,\hat{\theta}_6(c_{\mathrm{new}})]$ is not directly observed. We estimate it via a softmax-weighted interpolation over the examplar set $R$, using a hybrid similarity measure that combines semantic content with structural motif. Let $e(c_{\mathrm{new}})$ and $e(c_i)$ denote the $\ell_2$-normalised text embeddings of $c_{\mathrm{new}}$ and $c_i$ respectively, and let $\hat{\theta}_{\text{norm}}(c_{\mathrm{new}})$ and $\theta_{\text{norm}}(c_i)$ denote their $\ell_2$-normalised theta vectors. The hybrid representation is defined as
\begin{align*}
\tilde{e}(c_{\mathrm{new}}) = \ell_2\!\left[(1-\lambda)\cdot e(c_{\mathrm{new}}) \;\Big\|\; \lambda\cdot\hat{\theta}_{\text{norm}}(c_{\mathrm{new}})\right], \qquad \lambda = 0.7,
\end{align*}
where $\|$ denotes vector concatenation and $\lambda$ controls the relative contribution of structural motif over linguistic content. Since $\bm{\hat{\theta}}(c_{\mathrm{new}})$ is not observed directly, a two-pass procedure is employed. 

In the first pass, a simple estimate $\bm{\hat{\theta}}(c)$ is obtained via text-only softmax similarity. In the second pass, the hybrid representation is constructed using $\bm{\hat{\theta}}(c)$ and the final softmax weights are computed as
\begin{align*}
w_i = \frac{\exp\!\left(\langle \tilde{e}(c_{\mathrm{new}}),\, \tilde{e}(c_i)\rangle / \tau\right)}{\sum_{j=1}^{n} \exp\!\left(\langle \tilde{e}(c_{\mathrm{new}}),\, \tilde{e}(c_j)\rangle / \tau\right)}, \qquad \tau = 0.01,
\end{align*}
where $\langle\cdot,\cdot\rangle$ denotes cosine similarity and $\tau$ is a temperature parameter controlling the sharpness of the weight distribution. The final estimated feature vector and non-triviality score are then
\begin{align}\label{thetahatdefinition}
{\bm{\hat{\theta}}}(c_{\mathrm{new}}) 
=\sum_{i=1}^{n} w_i \cdot \bm{\theta}(c_i),\quad\text{and}\quad
\Hat{\Upsilon}(\bm{\hat{\theta}})
=\frac{1}{n}\sum_{i=1}^{n}{\bf{1}}_{\left[d^2(\bm{\theta_i})\leq d^2({\bm{\Hat{\theta}}})\right]}.
\end{align}
This construction ensures that conjectures which are both semantically related and structurally similar in $\bm{\theta}$-space receive the highest weights, so that the estimated motif and non-triviality score are inherited from the most relevant element in $R$. 

\medskip
Note that the proposed scores do not claim to determine provability or unprovability. Instead, it quantifies structural proximity to conjectures that are empirically resistant to existing proof techniques and gives us an idea of the complexity of the problem. 

\begin{remark}
The Dirac delta formulation of $\mathcal{H}$ preserves the exact geometry of the exemplar set without introducing distributional assumptions unsupported by the data, as any parametric smoothing would artificially interpolate between conjectures occupying meaningfully distinct positions in ${\bm{\theta}}$-space. The parameter $\lambda=0.7$ assigns dominant weight to the structural motif $\Hat{\theta}(c_{\mathrm{new}})$ over linguistic content in the hybrid representation, reflecting the principle that mathematical depth and complexity, as encoded by $(\theta_1,\ldots,\theta_6)$, are more discriminative of conjecture similarity than surface level semantic content. The temperature parameter $\tau=0.01$ concentrates the softmax weights sharply on the nearest reference conjectures in hybrid representation space, ensuring that $\hat{\theta}(c_{\mathrm{new}})$ and $\Upsilon(c_{\mathrm{new}})$ are inherited from the most structurally relevant element in $R$ rather than diluted by distant members.    
\end{remark}


\section{Experimental Results}
\noindent
The non-triviality score $\Upsilon(c)\in[0,1]$ measures the structural position of a known conjecture within the empirical hardness distribution $\mathcal{H}$, calibrated against 18 reference conjectures from analytic number theory with the Riemann Hypothesis as the ceiling at $\Upsilon=1$. A score near 1 indicates that the conjecture is a structural outlier, simultaneously extreme across all six theta dimensions, as RH is. A score near zero does not indicate mathematical triviality. Rather, it indicates that the conjecture occupies the structural centre of the known hardness landscape, which is itself a region of profound mathematical depth. For example, in our current conjecture space ${\bm{\theta}}$ (as given in Table~\ref{conjecturespace}), Chowla's conjecture on prime correlations, widely regarded as harder and deeper than RH, scores $\Upsilon=0$ precisely because its structural profile is balanced and central rather than extreme. 

\subsection{Example}\label{benchmarkexample}
In this section, we present and interpret the benchmark scores for Conjecture~\ref{a1}. Before examining its scoring, we present the empirical distribution of the known conjectures with respect to their assigned score vectors, as given in Table~\ref{knownconjectures}.

\begin{table}[ht]
\centering
\begin{tabular}{|c|c|c|c|}
\hline
Rank & Conjecture & $d^2$ & $\Upsilon$ \\
\hline
1  & Chowla's Conjecture & 1.3671 & 0.0000 \\
\hline
2  & Twin Prime Conjecture & 2.8386 & 0.0588 \\
\hline
3  & Polignac's Conjecture & 3.3000 & 0.1176 \\
\hline
4  & Generalised Elliott-Halberstam & 4.4167 & 0.1765 \\
\hline
5  & Elliott-Halberstam Conjecture & 4.5940 & 0.2353 \\
\hline
6  & Granville's Refinement of Cram\'{e}r's Conjecture & 5.0593 & 0.3529 \\
\hline
7  & Hardy-Littlewood Second Conjecture & 5.0593 & 0.3529 \\
\hline
8  & Shanks Conjecture & 5.5015 & 0.4118 \\
\hline
9  & Parity Problem & 5.5948 & 0.4706 \\
\hline
10 & Bateman-Horn Conjecture & 5.7381 & 0.5294 \\
\hline
11 & Landau's Fourth Problem (Near-Square Primes) & 6.1323 & 0.5882 \\
\hline
12 & Dickson's Conjecture & 6.3513 & 0.6471 \\
\hline
13 & Schinzel's Hypothesis H & 6.8698 & 0.7059 \\
\hline
14 & Firoozbakht's Conjecture & 7.0127 & 0.7647 \\
\hline
15 & Hardy-Littlewood Prime k-tuples Conjecture & 7.1207 & 0.8235 \\
\hline
16 & Bunyakovsky Conjecture & 7.6289 & 0.8824 \\
\hline
17 & Legendre's Conjecture & 7.9739 & 0.9412 \\
\hline
18 & Riemann Hypothesis & 9.4408 & 1.0000 \\
\hline
\end{tabular}
\caption{Conjecture space of known conjectures}
\label{conjecturespace}
\end{table}

\noindent
Conjecture~\ref{a1} is stated entirely in terms of the function $B_Q(x)$ as defined in~\eqref{bqdefinition}, a definition introduced for the first time by HypothesiX. Its language, notation, and structural formulation share no surface-level overlap with either the Twin Prime Conjecture or the Elliott-Halberstam Conjecture. Nevertheless, the benchmark assigns
\begin{align*}
\hat{\bm{\theta}} = [8.01, 8.16, 7.0, 5.1, 6.07, 5.98],    
\end{align*}
\noindent
a Mahalanobis distance $d^2 = 3.70$, and a hybrid similarity of $0.932$ to the Elliott-Halberstam conjecture, placing it geometrically between the twin prime conjecture and the Elliott-Halberstam conjecture
\begin{align}\label{d2forbq}
d^2_{\text{twin prime}} \approx 2.84 
\leq d^2_{\text{Conjecture A.1}} \approx 3.70 
\leq d^2_{\text{Elliott-Halberstam}} \approx 4.42,
\end{align}
in the conjecture embedding. This reflects the benchmark's ability to recover the genuine mathematical content of the conjecture, namely that it encodes information about the distribution of twin primes in arithmetic progressions, which is precisely the intersection of those two conjectures.

\medskip
\noindent
Crucially, we provide an elementary proof of Theorem~\ref{pi2bq} 
(a weaker version of Conjecture~\ref{a1}), yet the benchmark scores do not reflect this. This is by design. The benchmark is not measuring provability; it is measuring the third condition of the Birch test, which asks whether a conjecture carries sufficient information to spark new research. The most directly relevant component of $\hat{\bm{\theta}}$ for this condition is $\hat{\theta}_5 \approx 6.07$, which estimates the number of results that could meaningfully build upon the conjecture across mathematical domains. A score of $6.07$ out of $10$ on this dimension, for an entirely new and provable statement, indicates that the information it introduces has genuine structural utility for further research, particularly in sieve-theoretic approaches to twin prime distribution. Additionally, $\hat{\theta}_1 \approx 8.01$ suggests a rich equivalence structure, meaning the conjecture's content is likely expressible in multiple useful 
forms, further supporting its research potential.

\medskip
\noindent
Taken together, $\hat{\theta}_2 \approx 8.16$ reflects the axiomatic depth of the territory the conjecture operates in, $\hat{\theta}_3 \approx 7.0$ indicates strong connections to the existing literature, and $\hat{\theta}_6 \approx 5.98$ suggests meaningful reductions to known hard problems. The comparatively lower $\hat{\theta}_4 \approx 5.1$ is consistent with the fact that $B_Q(x)$ involves residue-class minimisation, making direct computational verification across all moduli $Q$ non-trivial. Together, these scores characterise a conjecture that is elementary to establish yet mathematically non-trivial in the sense that matters for the Birch test: it introduces a new structural object whose properties are connected to deep open problems and from which further research can be developed, as consistent with our argument in Section~\ref{selectedinequalities}. Hence, this benchmark captures the essence of the non-triviality condition of the Birch test in a quantifiable manner.

\subsubsection{Significance of the Mahalanobis Distance}\label{d2importance}
The Mahalanobis distance $d^2(\bm{{\theta}})$ was introduced in Section~\ref{mahalanobis} as a measure of structural proximity to the empirical hardness distribution of known conjectures.  We now explain that this distance has a deeper significance. It can be used to localise the errors in machine-generated conjectures at scale. 

\medskip
We explain the above claim with an example. Consider the statement of Conjecture~\ref{a1} states that for all squarefree $Q$ such that $6 \mid Q$,
\begin{align}\label{incorrectrelation}
\pi_2(x)\leq B_Q(x)+2
\end{align}
for all $x\geq 7$. Let us assume that this is not universally true. We do not believe Conjecture~\ref{a1} to be false. In fact, given the Hardy-Littewood $k$-tuples conjecture, it is likely to hold for all $Q$ and $x$ satisfying the stated conditions\footnote{There are, however, incorrect conjectures generated by HypothesiX, as can be seen in the full list of conjectures. We nevertheless analyse this conjecture under the assumption that it is not universally true, purely to avoid introducing additional number-theoretic jargon into the article. Analysing a different problem would require explaining its background and related research, thereby moving away from the main focus of this article.}. If it turns out to be false for any $Q$, it might be due to the additive constant 2 being insufficient to absorb the contribution from small primes for larger value of $Q$.

\medskip
\noindent
Observe that from~\eqref{d2forbq}, we know that the statement lies between the Twin Prime Conjecture and the Elliott–Halberstam Conjecture in the $\bm{\theta}$-space. It is evident that the left-hand side of the inequality has the twin prime counting function; therefore, Conjecture~\ref{a1} is close to the Twin Prime Conjecture in the conjecture space. Recalling the Elliott–Halberstam conjecture, it states that the error function
\begin{align*}
E(x;Q)
\coloneqq \max_{(a,Q)=1} \left|\pi(x;Q,a)-\frac{\pi(x)}{\varphi(Q)}\right|,
\end{align*}
is bounded by
\begin{align*}
\sum_{1\leq Q\leq x^{\theta}}|E(x;Q)|\ll \frac{x}{\log^A x},
\end{align*}
for every $\theta<1$ and $x>2$, with $A>0$. Noting the definition of $B_Q(x)$, as given in~\eqref{bqdefinition} and the statement of the Elliot-Halberstam conjecture the benchmark indicate us to understand the connection between them with the twin prime conjecture which eventually help us fixing the error. Before writing the argument on how we fix the error in~\eqref{incorrectrelation}, it is worth noting that for a number theorist, the error and the correct statement is obvious. However, for machine generating mathematics at scale, the proposed distance $d^2(\bm{\theta})$ works as an error indication signal, making it easier for a non-expert or a machine to look at the right neighbourhood within the vast body of knowledge.

\medskip
\noindent
From the work of Maynard~\cite{James2} and Tao, we know that the lower bound for the gaps between primes $p_{n+1}-p_n\leq 12$, under the assumption of Elliot-Halberstam conjecture, with $p_n$ denoting the $n$-th prime. If the right hand side become 2, then it will solve the twin prime conjecture. This is the most direct known connection between these two problems within the known literature. While the Maynard-Tao sieve gives a lower bound, the conjecture in hand propose an upper bound with the twin prime counting function. Thus, looking at the error term of the Elliot-Halberstam conjecture, we note that $\pi(x;Q,a)\sim \pi(x)/\varphi(Q)$. However, roughly speaking, to understand the incorrectness in~\eqref{incorrectrelation}, we check the asymptotic estimate for the left and right hand side and see the relationship between them.
\begin{align*}
\mathrm{LHS:}& \;\pi_2(x)\sim 2C_2\frac{x}{(\log x)^2}\quad (\text{by Conjecture~\ref{hardylittewood}})\\
\mathrm{RHS:}& \; B_Q(x)\approx\pi(x)/\varphi(Q)\sim \frac{x}{\varphi(Q)\log x}.
\end{align*}
\noindent
From the aforementioned expression, it becomes evident that the objects such as $\pi_2(x)$ and $B_2(x)$, as well as the direction of the inequality, are elementarily correct, and therefore the incorrectness lies in the constant.

\begin{remark}
If a conjecture lies far from all known conjectures in the $\bm{\theta}$-space, it indicates one of two possibilities. Either it is a genuinely important discovery, or it is a malformed result. As mentioned earlier, this benchmark cannot formally verify the correctness of a conjecture. Rather, the Mahalanobis distance $d^2(\bm{\theta})$ gives a structural comparison against known results, indicating where in mathematical knowledge a machine generated new conjecture sits. In this sense, the Mahalanobis distance extends the Birch test benchmark from a scoring system into a first-pass diagnostic that can reduce the manual evaluation that frameworks such as~\cite{alex} relied upon entirely.
\end{remark}

\subsection{Limitations}
In this section, we discuss about the key limitations of the proposed benchmark. 
\begin{enumerate}
\item We have relied on the literature review to assign scores for the vector ${\bm{\theta}(c_i)}_{i=1}^{18}$, making the framework sensitive to the consistency of the reference table. Since $n=18$ in the examplar set the scoring resolution is limited to intervals of $\frac{1}{17}\approx 0.059$, and a larger reference set would yield an accurate hardness scoring.  

\item The estimated structural value of $\bm{\hat{\theta}}$ for new conjectures is a convex combination of ${\bm{\theta}}$ vector, constraining it to the convex hull of the examplar set. Conjectures whose true structural motif lies outside this envelope will consequently be underscored. 
\end{enumerate}

\subsection{Results for non-triviality Scoring}
\noindent
We present two key results in this section. First, we provide the values of $\bm{\hat{\theta}}$ and $\Upsilon'$ as defined in~\eqref{thetahatdefinition} for all 78 conjectures listed in AppendiX~\ref{appendix}, and available on GitHub\footnote{\url{https://github.com/MadhuparnaDas96/HypothesiX-Benchmark}} along with supported codes for correctness verification. Furthermore, we construct a clustering model to compare the distribution of the generated results with known unsolved problems and established theorems in the literature.


\subsubsection{Hardness Scoring}

In this section, we present the values of $\bm{\hat{\theta}}$ and $\Hat{\Upsilon}$ in Table~\ref{benchmarktable1} and Table~\ref{benchmarktable2} for the generated inequalities and the conjectures related to the distribution of prime $k$-tuples listed in the GitHub repository. 

\begin{table}[ht]
\begin{center}
\begin{tabular}{|c|c|c|c|c|}
\hline
$c_{new}$ Name & ${\bm{\Hat{\theta}}}=[{\Hat{\theta}}_1,{\Hat{\theta}}_2,{\Hat{\theta}}_3,{\Hat{\theta}}_4,{\Hat{\theta}}_5,{\Hat{\theta}}_6]$ & $d^2(\bm{\Hat{\theta}})$ & $\Hat{\Upsilon}({\bm{\Hat{\theta}}})$ & Closest known conjecture \\
\hline
A.1  & $[8.01,\ 8.16,\ 7.00,\ 5.10,\ 6.07,\ 5.98]$ & 3.7061 & 0.1667 & Elliott--Halberstam\\
A.2  & $[7.93,\ 8.31,\ 7.05,\ 5.20,\ 6.09,\ 5.88]$ & 2.8243 & 0.0556 & Elliott--Halberstam \\
A.3  & $[7.69,\ 8.75,\ 6.97,\ 5.93,\ 6.08,\ 5.25]$ & 0.6794 & 0.0000 & Elliott--Halberstam \\
A.4  & $[8.00,\ 8.20,\ 7.01,\ 5.06,\ 6.10,\ 6.03]$ & 3.6277 & 0.1667 & Elliott--Halberstam \\
A.5  & $[8.00,\ 8.19,\ 7.01,\ 5.08,\ 6.09,\ 6.02]$ & 3.6224 & 0.1667 & Elliott--Halberstam \\
A.6  & $[8.11,\ 8.52,\ 6.90,\ 5.34,\ 6.22,\ 5.95]$ & 2.2906 & 0.0556 & Elliott--Halberstam \\
A.7  & $[7.90,\ 8.85,\ 6.89,\ 4.93,\ 6.31,\ 6.12]$ & 2.1612 & 0.0556 & Generalized Elliott--Halberstam \\
A.8  & $[6.66,\ 7.06,\ 6.16,\ 8.54,\ 5.11,\ 4.29]$ & 0.7121 & 0.0000 & Shanks Conjecture\\
A.9  & $[8.72,\ 9.52,\ 8.53,\ 8.61,\ 6.72,\ 4.24]$ & 4.1739 & 0.1667 & Hardy--Littlewood Prime $k$-tuples \\
A.10 & $[8.03,\ 8.25,\ 7.07,\ 5.24,\ 6.12,\ 5.95]$ & 3.1617 & 0.1111 & Elliott--Halberstam \\
A.11 & $[4.03,\ 8.01,\ 5.02,\ 3.03,\ 3.03,\ 2.02]$ & 4.9842 & 0.2778 & HL Second Conjecture \\
A.12 & $[4.11,\ 8.02,\ 5.07,\ 3.08,\ 3.10,\ 2.11]$ & 4.7892 & 0.2778 & HL Second Conjecture \\
A.13 & $[5.38,\ 8.25,\ 5.80,\ 3.84,\ 4.18,\ 3.41]$ & 2.4850 & 0.0556 & HL Second Conjecture \\
A.14 & $[8.03,\ 8.32,\ 7.11,\ 5.24,\ 6.18,\ 5.97]$ & 2.9402 & 0.1111 & Elliott--Halberstam \\
A.15 & $[7.99,\ 8.48,\ 7.15,\ 5.26,\ 6.29,\ 6.04]$ & 2.4232 & 0.0556 & Elliott--Halberstam  \\
A.16 & $[8.08,\ 8.98,\ 7.58,\ 6.55,\ 6.49,\ 5.50]$ & 0.6322 & 0.0000 & Chowla\\
A.17 & $[4.11,\ 8.04,\ 5.09,\ 3.09,\ 3.11,\ 2.13]$ & 4.7940 & 0.2778 & HL Second Conjecture \\
A.18 & $[4.90,\ 5.70,\ 5.02,\ 8.24,\ 4.52,\ 3.50]$ & 2.6120 & 0.0556 & Shanks Conjecture\\
A.19 & $[8.00,\ 8.72,\ 6.48,\ 4.53,\ 6.09,\ 6.00]$ & 3.2349 & 0.1111 & Generalized EH  \\
A.20 & $[7.99,\ 8.23,\ 6.96,\ 4.99,\ 6.08,\ 6.03]$ & 3.6467 & 0.1667 & Elliott--Halberstam \\
A.21 & $[7.98,\ 8.40,\ 7.03,\ 5.08,\ 6.20,\ 6.09]$ & 2.9499 & 0.1111 & Elliott--Halberstam  \\
A.22 & $[8.01,\ 8.48,\ 7.06,\ 5.20,\ 6.26,\ 6.09]$ & 2.5644 & 0.0556 & Elliott--Halberstam  \\
A.23 & $[8.00,\ 8.43,\ 6.83,\ 4.88,\ 6.11,\ 6.01]$ & 3.1494 & 0.1111 & Elliott--Halberstam \\
A.24 & $[8.03,\ 8.49,\ 6.71,\ 4.80,\ 6.09,\ 5.99]$ & 3.1858 & 0.1111 & Elliott--Halberstam  \\
A.25 & $[7.99,\ 8.38,\ 6.94,\ 5.03,\ 6.14,\ 6.04]$ & 3.0473 & 0.1111 & Elliott--Halberstam \\
A.26 & $[8.03,\ 8.48,\ 7.08,\ 5.22,\ 6.28,\ 6.14]$ & 2.5927 & 0.0556 & Elliott--Halberstam  \\
A.27 & $[6.71,\ 7.37,\ 6.68,\ 8.09,\ 5.59,\ 4.39]$ & 0.2552 & 0.0000 & Shanks Conjecture\\
A.28 & $[8.04,\ 8.33,\ 7.04,\ 5.14,\ 6.18,\ 6.01]$ & 3.0968 & 0.1111 & Elliott--Halberstam \\
A.29 & $[4.21,\ 5.17,\ 4.22,\ 8.08,\ 4.11,\ 3.10]$ & 4.7127 & 0.2778 & Shanks Conjecture \\
A.30 & $[8.02,\ 8.16,\ 6.95,\ 5.02,\ 6.05,\ 5.99]$ & 3.8598 & 0.1667 & Elliott--Halberstam \\
A.31 & $[8.00,\ 8.16,\ 6.97,\ 5.02,\ 6.05,\ 5.99]$ & 3.8427 & 0.1667 & Elliott--Halberstam \\
A.32 & $[8.04,\ 8.32,\ 6.98,\ 5.10,\ 6.14,\ 5.98]$ & 3.1710 & 0.1111 & Elliott--Halberstam \\
A.33 & $[8.03,\ 8.52,\ 7.09,\ 5.27,\ 6.28,\ 6.00]$ & 2.3294 & 0.0556 & Elliott--Halberstam \\
A.34 & $[8.03,\ 8.25,\ 6.98,\ 5.08,\ 6.10,\ 5.98]$ & 3.4188 & 0.1667 & Elliott--Halberstam\\
A.35 & $[7.99,\ 8.44,\ 6.88,\ 4.92,\ 6.14,\ 6.00]$ & 3.0290 & 0.1111 & Elliott--Halberstam \\
A.36 & $[8.01,\ 8.18,\ 7.04,\ 5.15,\ 6.09,\ 5.98]$ & 3.5453 & 0.1667 & Elliott--Halberstam \\
A.37 & $[8.00,\ 8.16,\ 6.95,\ 4.98,\ 6.05,\ 6.00]$ & 3.9318 & 0.1667 & Elliott--Halberstam\\
A.38 & $[8.01,\ 8.29,\ 6.82,\ 4.86,\ 6.05,\ 6.00]$ & 3.6299 & 0.1667 & Elliott--Halberstam \\
A.39 & $[8.05,\ 8.25,\ 7.03,\ 5.13,\ 6.16,\ 6.06]$ & 3.4141 & 0.1667 & Elliott--Halberstam  \\
A.40 & $[7.91,\ 8.73,\ 7.27,\ 5.87,\ 6.34,\ 5.73]$ & 1.0052 & 0.0000 & Elliott--Halberstam \\
A.41 & $[8.04,\ 8.29,\ 6.97,\ 5.08,\ 6.12,\ 6.01]$ & 3.3141 & 0.1667 & Elliott--Halberstam \\
A.42 & $[8.07,\ 8.35,\ 7.03,\ 5.20,\ 6.18,\ 6.00]$ & 2.9597 & 0.1111 & Elliott--Halberstam  \\
A.43 & $[8.49,\ 9.01,\ 7.99,\ 7.67,\ 6.58,\ 4.87]$ & 1.4792 & 0.0556 & HL Prime $k$-tuples \\
A.44 & $[8.43,\ 8.99,\ 7.88,\ 7.65,\ 6.57,\ 4.97]$ & 1.1987 & 0.0000 & HL Prime $k$-tuples \\
\hline
\end{tabular}
\end{center}
\caption{Hardness Scoring for the generated inequalities}
\label{benchmarktable1}
\end{table}

\begin{table}[ht]
\begin{center}
\begin{tabular}{|c|c|c|c|c|}
\hline
$c_{new}$ Name & ${\bm{\Hat{\theta}}}=[{\Hat{\theta}}_1,{\Hat{\theta}}_2,{\Hat{\theta}}_3,{\Hat{\theta}}_4,{\Hat{\theta}}_5,{\Hat{\theta}}_6]$ & $d^2(\bm{\Hat{\theta}})$ & $\Hat{\Upsilon}({\bm{\Hat{\theta}}})$ & Closest known conjecture \\
\hline
A.45  & $[8.04, 8.24, 6.97, 5.07, 6.09, 5.98]$ & $3.5046$ & $0.1667$ & Elliott-Halberstam  \\
A.46 & $[8.03, 8.42, 6.87, 4.97, 6.14, 6.00]$ & $3.0618$ & $0.1111$ & Elliott-Halberstam \\
A.47  & $[8.22, 8.92, 7.64, 6.21, 6.80, 5.98]$ & $1.0370$ & $0.0000$ & Chowla\\
A.48  & $[8.81, 8.95, 7.48, 7.58, 6.51, 5.34]$ & $1.2594$ & $0.0000$ & Bateman-Horn \\
A.49 & $[7.72, 9.14, 7.31, 5.11, 6.56, 6.31]$ & $1.8176$ & $0.0556$ & Parity Problem \\
A.50  & $[7.64, 8.68, 6.89, 5.04, 6.07, 5.65]$ & $1.6574$ & $0.0556$ & Elliott-Halberstam  \\
A.51  & $[8.05, 8.36, 7.01, 5.21, 6.16, 5.95]$ & $2.8451$ & $0.1111$ & Elliott-Halberstam \\
A.52  & $[8.68, 8.92, 7.50, 7.72, 6.45, 5.22]$ & $1.1114$ & $0.0000$ & Bateman-Horn  \\
A.53 & $[8.09, 8.52, 6.99, 5.26, 6.26, 5.99]$ & $2.4081$ & $0.0556$ & Elliott-Halberstam \\
A.54 & $[8.22, 9.05, 7.90, 6.56, 6.93, 5.97]$ & $0.8728$ & $0.0000$ & Chowla \\
A.55  & $[8.30, 8.79, 7.26, 6.16, 6.43, 5.66]$ & $1.1495$ & $0.0000$ & Elliott-Halberstam \\
A.56  & $[4.30, 8.10, 5.21, 3.24, 3.28, 2.29]$ & $4.3688$ & $0.1667$ & Granville's Refined Cram\'{e}r \\
A.57  & $[8.04, 8.55, 7.14, 5.36, 6.30, 6.04]$ & $2.1861$ & $0.0556$ & Elliott-Halberstam \\
A.58  & $[7.98, 8.54, 6.81, 4.84, 6.15, 6.02]$ & $2.8952$ & $0.1111$ & Elliott-Halberstam \\
A.59 & $[8.01, 8.38, 6.70, 4.73, 6.03, 5.99]$ & $3.5619$ & $0.1667$ & Elliott-Halberstam \\
A.60  & $[8.02, 8.41, 6.90, 4.98, 6.14, 6.01]$ & $3.0428$ & $0.1111$ & Elliott-Halberstam  \\
A.61 & $[8.02, 8.45, 6.88, 4.97, 6.15, 6.02]$ & $2.9794$ & $0.1111$ & Elliott-Halberstam  \\
A.62  & $[8.00, 8.65, 6.81, 4.87, 6.22, 6.01]$ & $2.6322$ & $0.0556$ & Generalized EH \\
A.63  & $[8.02, 8.27, 6.97, 5.06, 6.11, 6.01]$ & $3.3794$ & $0.1667$ & Elliott-Halberstam  \\
A.64  & $[8.02, 8.27, 6.99, 5.11, 6.11, 6.00]$ & $3.2841$ & $0.1111$ & Elliott-Halberstam \\
A.65  & $[8.02, 8.33, 6.99, 5.20, 6.13, 5.95]$ & $2.9041$ & $0.1111$ & Elliott-Halberstam \\
A.66  & $[8.04, 8.36, 6.96, 5.09, 6.13, 5.97]$ & $3.0539$ & $0.1111$ & Elliott-Halberstam  \\
A.67  & $[6.56, 7.23, 6.76, 8.27, 5.53, 4.33]$ & $0.4060$ & $0.0000$ & Shanks Conjecture \\
A.68 & $[7.86, 8.84, 7.30, 5.68, 6.38, 5.90]$ & $1.1337$ & $0.0000$ & Elliott-Halberstam \\
A.69  & $[8.07, 8.78, 7.24, 5.90, 6.34, 5.74]$ & $1.1094$ & $0.0000$ & Elliott-Halberstam  \\
A.70  & $[8.00, 8.52, 6.77, 4.81, 6.14, 6.00]$ & $3.0075$ & $0.1111$ & Elliott-Halberstam  \\
A.71 & $[8.00, 8.82, 6.30, 4.32, 6.06, 6.00]$ & $3.6216$ & $0.1667$ & Generalized EH \\
A.72  & $[8.00, 8.59, 6.48, 4.48, 6.03, 6.00]$ & $3.5456$ & $0.1667$ & Generalized EH \\
A.73  & $[7.97, 8.52, 7.04, 5.10, 6.25, 6.04]$ & $2.5210$ & $0.0556$ & Elliott-Halberstam \\
A.74  & $[8.00, 8.45, 6.98, 5.06, 6.18, 6.01]$ & $2.7829$ & $0.0556$ & Elliott-Halberstam  \\
A.75  & $[7.98, 8.79, 6.72, 4.76, 6.23, 6.00]$ & $2.5863$ & $0.0556$ & Generalized EH \\
A.76  & $[8.01, 8.39, 6.76, 4.79, 6.07, 5.99]$ & $3.4203$ & $0.1667$ & Elliott-Halberstam \\
A.77  & $[8.01, 8.70, 6.49, 4.51, 6.09, 6.00]$ & $3.2894$ & $0.1111$ & Generalized EH \\
A.78  & $[8.01, 8.52, 6.82, 4.86, 6.16, 5.99]$ & $2.9455$ & $0.1111$ & Elliott-Halberstam  \\
\hline
\end{tabular}
\end{center}
\caption{Hardness Scoring for the generated conjectures related to the prime $k$-tuples}
\label{benchmarktable2}
\end{table}

We also visualise the conjecture cluster as a two-dimensional map. Each point is a conjecture or result, projected onto the first two principal components of the six-dimensional feature space $\bm{\theta}$, learned from the 18 reference conjectures. Deep blue circles are known reference conjectures, and the light blue circle marks the Riemann Hypothesis as the ceiling. The plus sign $(+)$ is the centroid of the reference cluster. New conjectures appear as orange to red circles, where the colour encodes the non-triviality score $\Hat{\Upsilon}$: dark orange indicates $\Hat{\Upsilon}\to 0$ and light orange $\Hat{\Upsilon}\to 1$. Green filled circles represent proven theorems from existing literature, positioned via softmax weighted interpolation of reference theta vectors using text embedding similarity\footnote{All points are clipped to a bounding circle centred on the data range midpoint.}.

\begin{figure}[H]
\centering
\begin{subfigure}[b]{0.45\linewidth}
\centering
\includegraphics[width=\linewidth]{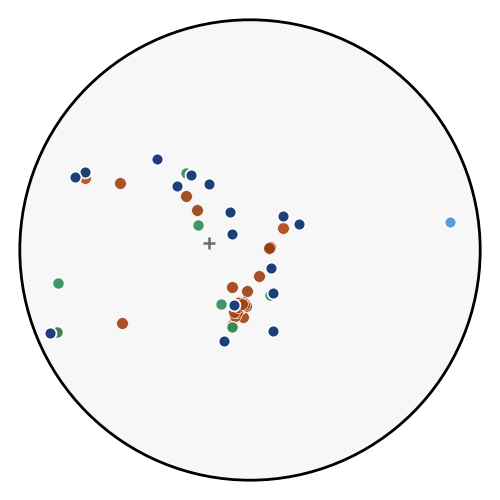}
\caption{$\bm{\theta}$ space for Conjectures A.1 to A.44}
\label{fig:sub1}
\end{subfigure}
\hfill
\begin{subfigure}[b]{0.45\linewidth}
\centering
\includegraphics[width=\linewidth]{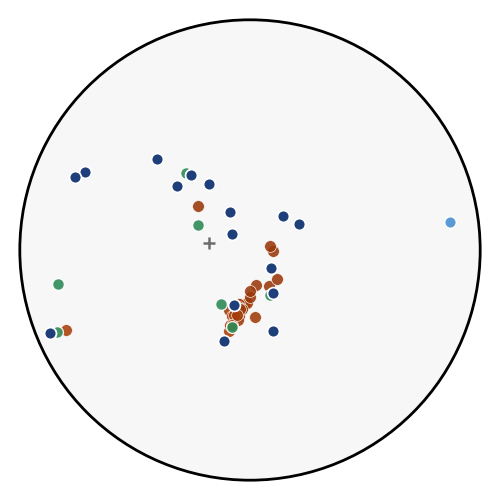}
\caption{$\bm{\theta}$ space for Conjectures A.45 to A.78}
\label{fig:sub2}
\end{subfigure}
\caption{Projection of the conjecture cluster in the learned $\theta$-space obtained from the six-dimensional feature vectors $\theta = (\theta_1,\ldots,\theta_6)$ defined in Section~\ref{nontriviality}. Each point represents either a known conjecture from the reference set $R$, a machine-generated conjecture produced by HypothesiX, or a related theorem from the literature, after projection onto the first two principal components of the feature space. Known conjectures are shown in blue, with lighter blue corresponding to higher non-triviality score $\Upsilon$ and darker blue to lower $\Upsilon$; the light blue point denotes the Riemann Hypothesis, which serves as the ceiling element of the hardness distribution. Generated conjectures are shown in orange-to-red, where lighter shades indicate larger estimated non-triviality scores $\hat{\Upsilon}$. Green points correspond to established theorems from the literature, embedded into the same space using softmax-weighted interpolation of the reference $\theta$-vectors via text-embedding similarity. The plus sign marks the centroid of the reference conjecture cluster. All points are clipped to a bounding circle centred at the midpoint of the projected data range.}
\label{fig:main}
\end{figure}

\subsubsection{$\bm{\hat{\theta}}$-space}

We have created cluster models for only for generated conjectures. For each conjecture $c_{\mathrm{new}}$, we compute the Mahalanobis distance with empirical mean $\hat{\mu}$ and covariance $\hat{\Sigma}$ as in Section~\ref{mahalanobis}. The distance $\hat{d}^2(\bm{\hat{\theta}})$ and the non-triviality scoring $\Upsilon(\bm{\hat{\theta}}^{(i)})$ as defined in~\eqref{d2formula} and~\eqref{upsilon} respectively, are given in Table~\ref{newthetaspacetable} and Figure~\ref{newcluster}.

\begin{table}[ht]
\centering
\begin{tabular}{|c|c|c||c|c|c|}
\hline
$c_{\mathrm{new}}$ Name & $\hat{d}^2$ & $\Upsilon(\bm{\hat{\theta}}^{(i)})$ &$c_{\mathrm{new}}$ Name & $\hat{d}^2$ & $\Upsilon(\bm{\hat{\theta}}^{(i)})$ \\
\hline
A.1  & 2.5662  & 0.649 & A.40 & 2.4650  & 0.584 \\
A.2  & 1.2294  & 0.338 & A.41 & 1.0832  & 0.273 \\
A.3  & 2.4949  & 0.597 & A.42 & 0.7263  & 0.156 \\
A.4  & 2.1097  & 0.545 & A.43 & 13.4898 & 0.844 \\
A.5  & 2.2239  & 0.571 & A.44 & 9.4676  & 0.805 \\
A.6  & 0.8562  & 0.182 & A.45 & 1.6763  & 0.481 \\
A.7  & 5.6151  & 0.714 & A.46 & 0.5704  & 0.091 \\
A.8  & 45.2689 & 1.000 & A.47 & 8.6734  & 0.792 \\
A.9  & 31.5297 & 0.974 & A.48 & 15.9945 & 0.896 \\
A.10 & 1.8285  & 0.519 & A.49 & 24.5856 & 0.961 \\
A.11 & 17.0916 & 0.935 & A.50 & 1.7105  & 0.494 \\
A.12 & 16.3041 & 0.909 & A.51 & 0.6270  & 0.117 \\
A.13 & 7.2164  & 0.766 & A.52 & 15.8260 & 0.883 \\
A.14 & 1.4082  & 0.403 & A.53 & 0.3581  & 0.013 \\
A.15 & 1.2209  & 0.312 & A.54 & 13.7411 & 0.857 \\
A.16 & 4.6216  & 0.688 & A.55 & 2.2114  & 0.558 \\
A.17 & 16.3346 & 0.922 & A.56 & 14.7460 & 0.870 \\
A.18 & 24.3066 & 0.948 & A.57 & 0.9520  & 0.234 \\
A.19 & 7.1468  & 0.753 & A.58 & 0.8813  & 0.208 \\
A.20 & 1.6630  & 0.468 & A.59 & 1.6432  & 0.455 \\
A.21 & 0.9705  & 0.247 & A.60 & 0.4552  & 0.039 \\
A.22 & 0.8668  & 0.195 & A.61 & 0.4392  & 0.026 \\
A.23 & 0.5997  & 0.104 & A.62 & 1.9824  & 0.532 \\
A.24 & 1.7258  & 0.506 & A.63 & 1.2214  & 0.325 \\
A.25 & 0.5686  & 0.078 & A.64 & 1.2758  & 0.364 \\
A.26 & 1.3060  & 0.390 & A.65 & 0.6837  & 0.143 \\
A.27 & 7.8625  & 0.779 & A.66 & 0.6559  & 0.130 \\
A.28 & 1.0057  & 0.260 & A.67 & 12.3073 & 0.818 \\
A.29 & 43.9392 & 0.987 & A.68 & 6.8352  & 0.740 \\
A.30 & 2.5153  & 0.636 & A.69 & 1.6283  & 0.442 \\
A.31 & 2.5715  & 0.662 & A.70 & 1.2996  & 0.377 \\
A.32 & 0.9349  & 0.221 & A.71 & 13.4352 & 0.831 \\
A.33 & 0.4988  & 0.065 & A.72 & 5.4325  & 0.701 \\
A.34 & 1.5340  & 0.429 & A.73 & 0.7391  & 0.169 \\
A.35 & 0.4773  & 0.052 & A.74 & 0.3411  & 0.000 \\
A.36 & 2.5097  & 0.623 & A.75 & 4.5622  & 0.675 \\
A.37 & 2.4967  & 0.610 & A.76 & 1.2141  & 0.299 \\
A.38 & 1.2306  & 0.351 & A.77 & 6.7691  & 0.727 \\
A.39 & 1.5115  & 0.416 & A.78 & 1.1004  & 0.286 \\
\hline
\end{tabular}
\caption{$\bm{\hat{\theta}}$-space for generated  Conjectures}
\label{newthetaspacetable}
\end{table}

\begin{figure}[H]
\centering
\begin{subfigure}[b]{0.3\linewidth}
\centering
\includegraphics[width=\linewidth]{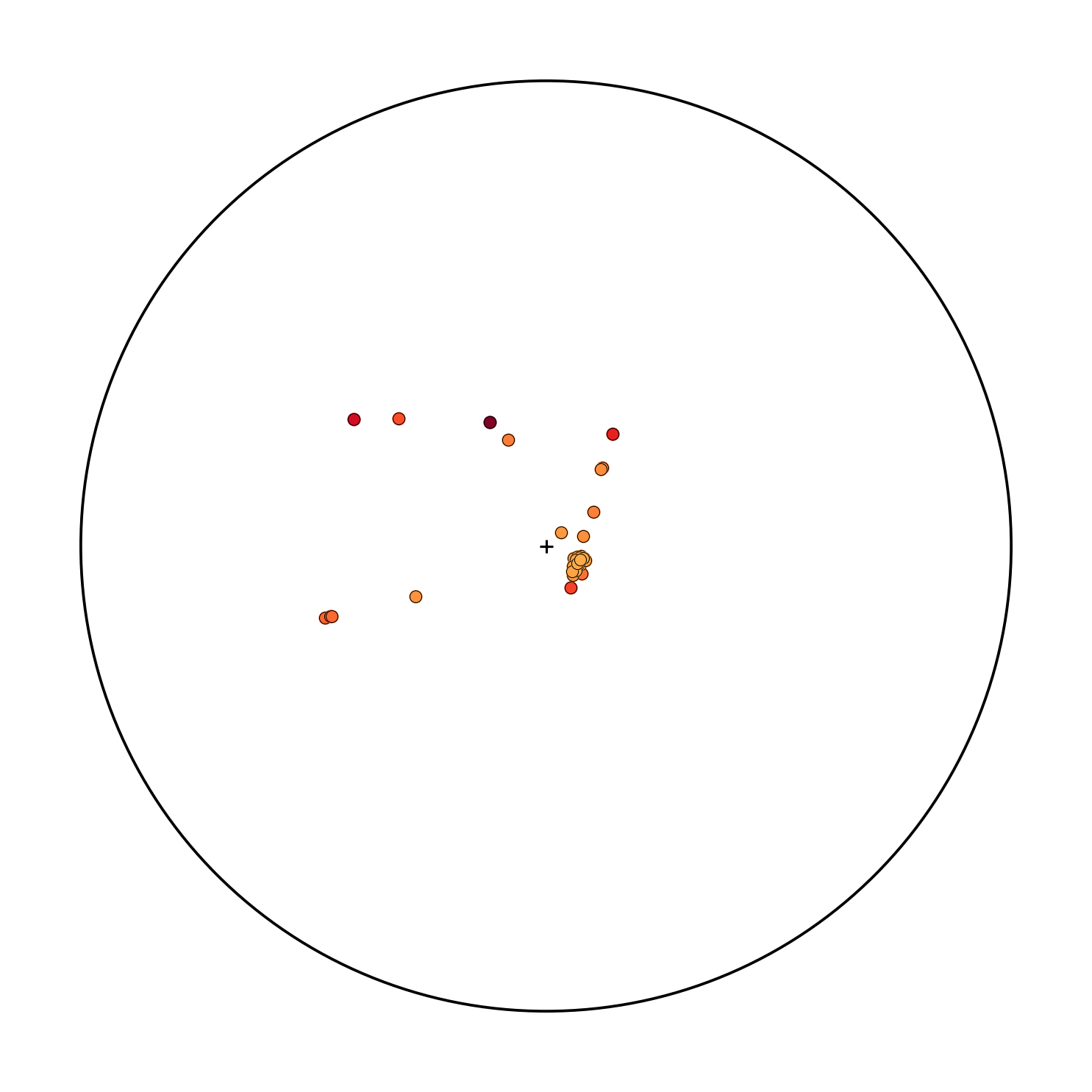}
\caption{$\bm{\hat{\theta}}$ space for Conjectures A.1 to A.44}
\label{fig:new1}
\end{subfigure}
\hfill
\begin{subfigure}[b]{0.3\linewidth}
\centering
\includegraphics[width=\linewidth]{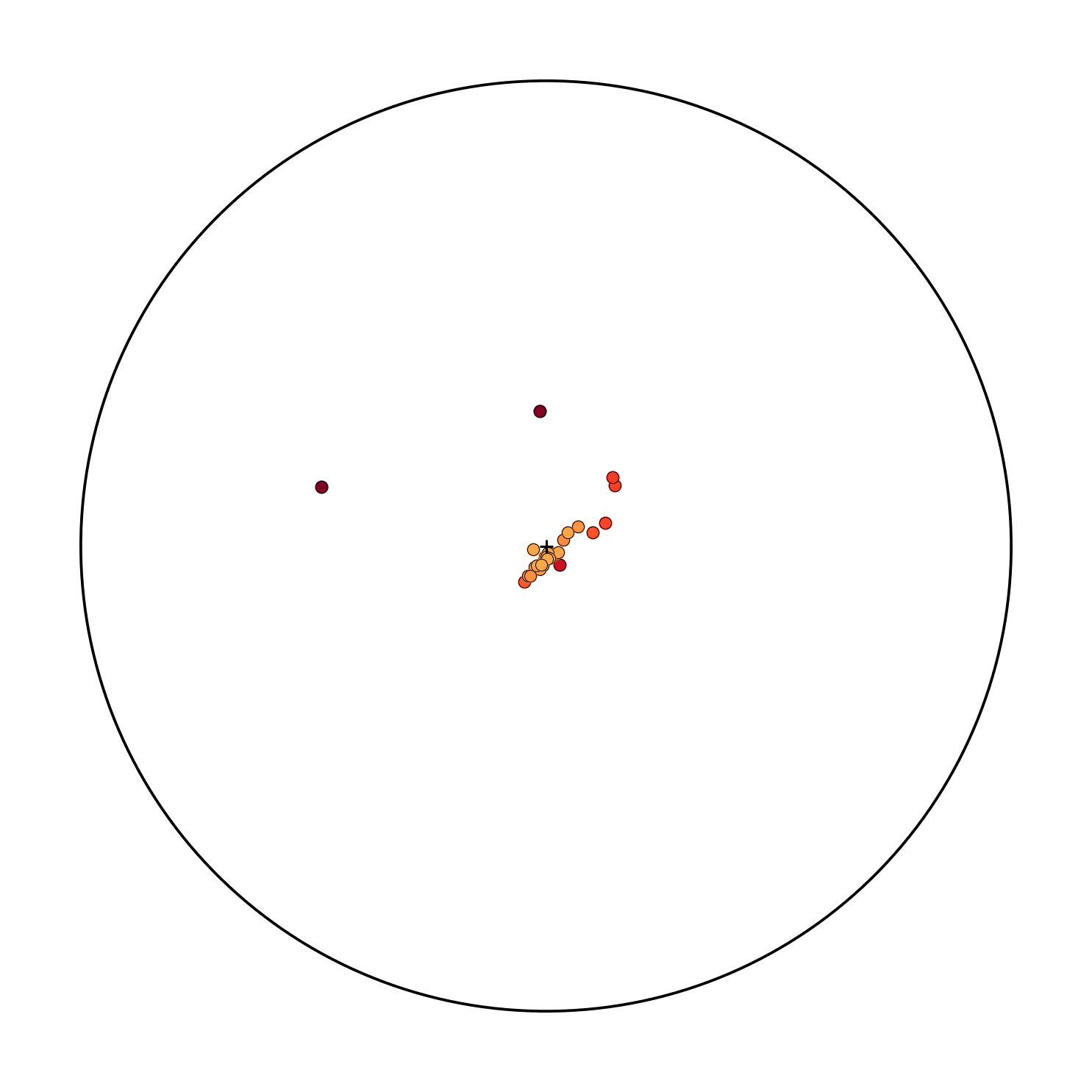}
\caption{$\bm{\hat{\theta}}$ space for Conjectures A.45 to A.78}
\label{fig:new2}
\end{subfigure}
\hfill
\begin{subfigure}[b]{0.3\linewidth}
\centering
\includegraphics[width=\linewidth]{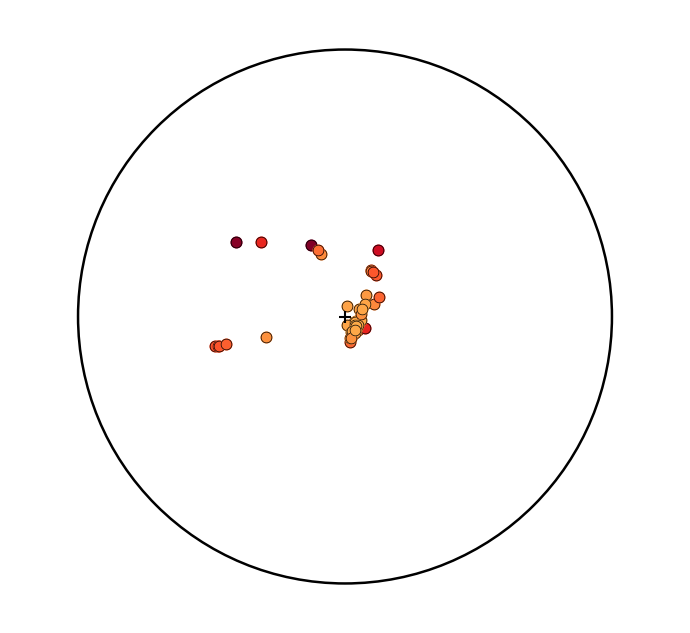}
\caption{$\bm{\hat{\theta}}$ space for Conjectures A.1 to A.78}
\label{fig:new3}
\end{subfigure}
\hfill
\caption{Projection of the generated conjecture cluster in the estimated $(\hat{\theta})$-space. This visualisation includes only the conjectures generated by HypothesiX and excludes the known reference conjectures from the display. For each generated conjecture $(c_{\mathrm{new}})$, the estimated structural feature vector $\hat{\theta}(c_{\mathrm{new}})$ is computed using the softmax-weighted interpolation procedure defined in Equation~\eqref{thetahatdefinition}, combining semantic text embeddings with structural motif similarity. The resulting six-dimensional vectors are projected onto the first two principal components of the learned feature space for visualization. Each point represents a generated conjecture, coloured according to its estimated non-triviality score $(\hat{\Upsilon})$: dark orange corresponds to conjectures with lower scores $(\hat{\Upsilon}\to 0)$, while light orange corresponds to conjectures with higher scores $(\hat{\Upsilon}\to 1)$. The figure illustrates the internal geometric organisation of the machine-generated conjecture space and highlights how the generated results cluster according to their inferred structural hardness and similarity in $(\hat{\theta})$-space.}
\label{newcluster}
\end{figure}

\medskip
\noindent
{\it Acknowledgement.} We would like to thank Yang Hui-He, Challenger Mishra, and Nicolas Robles for many helpful discussions. The author is supported by the EPSRC postdoctoral fellowship \#RE 131462.

\medskip
\noindent
{\it Rights Retention.} For the purpose of open access, the author has applied a Creative Commons Attribution (CC BY) licence to any Author Accepted Manuscript version arising from
this submission.

\appendix

\section{On the Benchmark}\label{appendix}
\noindent

\subsection{Benchmark comparison between two conjectures}
\noindent
In Theorem~\ref{pi2bq}, we prove a weaker version of Conjecture~\ref{a1}, and in Section~\ref{d2importance}, we assume that the conjecture may not universally hold for all $x\geq 7$ and square-free $Q$ such that $6 \mid Q$. However, as mentioned earlier, we have numerically verified the conjecture for $Q=30,210$ with $7\leq x\leq 10^6$, and in both cases the conjecture holds true\footnote{We have checked this using python, and the files are available on github.}.

\medskip
\noindent
We have explained scoring for Conjecture~\ref{a1} in $\bm{\theta}$-space in Section~\ref{benchmarkexample}. The purpose of this discussion is to compare the scoring system with another conjecture. We pick Conjecture A.8 (or Theorem~\ref{primedrop}) from the same conversation and comment on its score with respect to Conjecture~\ref{a1}. The benchmark assigns the score
\begin{align*}
\hat{\bm{\theta}}(c_{\mathrm{A.8}})
=[6.66, 7.06, 6.16, 8.54, 5.11, 4.29].
\end{align*}
\noindent
Here the score for $\theta_4(c_{\mathrm{A.8}})=8.54$ is higher compared to $\theta_4(c_{\mathrm{A.1}})=5.10$, since it is indeed much easier to verify the inequality appearing in Theorem~\ref{primedrop} in contrast with Conjecture~\ref{a1}. Now,
\begin{align*}
\theta_2(c_{\mathrm{A.8}})=7.06<\theta_2(c_{\mathrm{A.1}})=8.16,
\end{align*}
which is indeed the correct score, as evident from the fact that Conjecture A.8 is proven and stated as Theorem~\ref{primedrop}, whereas we do not currently have the techniques to prove Conjecture~\ref{a1}. However, we can prove a weaker version of it, namely Theorem~\ref{pi2bq}. Therefore, the scoring difference of $1.1$ is reasonable, as it lies on a scale of $[1,10]$, and the benchmark only has information from the statements of the conjectures, not their relevant proof methodologies.

\medskip
\noindent
Observe that
\begin{align*}
\theta_6(c_{\mathrm{A.8}})=4.29<\theta_6(c_{\mathrm{A.1}})=5.98,
\end{align*}
with a difference of $1.69$, as expected, since Conjecture~\ref{a1} can be used to rewrite the well-known parity problem, whereas Theorem~\ref{primedrop} does not have such a strong connection to any known open problems. We now turn our attention to
\begin{align*}
\theta_5(c_{\mathrm{A.8}})=5.11<\theta_5(c_{\mathrm{A.1}})=6.07,
\end{align*}
with a difference of $0.96$. We use the same example of the parity problem here, as the problem has been rewritten using the key idea of Conjecture~\ref{a1}, but to make progress in this direction with the proposed hybrid sieve, it is evident that we will require Theorem~\ref{primedrop}. Thus, their small difference in $\theta_5$ is justified.

Given that the definition of $B_Q(x)$ is truly novel, we can only speculate on the justification for the relation
\begin{align*}
\theta_3(c_{\mathrm{A.8}})=6.16<\theta_3(c_{\mathrm{A.1}})=7.0,
\end{align*}
based on the closest known conjectures appearing in the benchmark. Since Conjecture~\ref{a1} is closer to the Elliott–Halberstam conjecture, whereas Theorem~\ref{primedrop} is closer to the Shanks conjecture, which is weaker than the Elliott–Halberstam conjecture, it is reasonable to agree with the aforementioned inequality.

\medskip
\noindent
Finally, we note that
\begin{align*}
\theta_1(c_{\mathrm{A.8}})=6.16<\theta_1(c_{\mathrm{A.1}})=8.01,
\end{align*}
with the largest difference of $1.85$ compared to the other tuples. Both statements in the discussion play a crucial role in the development of the hybrid sieve argument presented in Section~\ref{hybridsieve}. However, even a weaker version of Conjecture~\ref{a1} is the key to defining the sieve weight and establishing a relation with the Maynard–Tao sieve weight as given in~\eqref{sieveweightconnection}. Furthermore, as the main purpose of proposing Problems 1–4 is to make progress towards the parity problems. Therefore, the consequences of Conjecture~\ref{a1} are much more significant than that of Theorem~\ref{primedrop}, justifying their scoring for $\theta_1$.


\printbibliography

\end{document}